\title{Polynomial Convergence of an Observer for an Infinite-Dimensional Oscillating System}
\author{
 A.~Zuyev \\
  Institute of Applied Mathematics and Mechanics, Sloviansk, Ukraine, \\
  Max Planck Institute for Dynamics of Complex Technical Systems, Magdeburg, Germany \\
  \texttt{alexander.zuyev@gmail.com} \\
  \And
 Ju.~Kalosha \\
  Institute of Applied Mathematics and Mechanics, Sloviansk, Ukraine, \\
  \texttt{julykucher@gmail.com} \\
}
\begin{document}
\numberwithin{equation}{section}
\newtheorem{definition}{Definition}[section]
\newtheorem{theorem}{Theorem}[section]
\newtheorem{lemma}{Lemma}[section]
\newtheorem{statement}{Statement}[section]
\newtheorem{proposition}{Proposition}[section]
\newtheorem{remark}{Remark}[section]
\newtheorem{corollary}{Corollary}[section]
\newtheorem{assumption}{Assumption}[section]

\newcommand{\sumf}[1]{\sum\limits_{#1=1}^\infty}
\newcommand{\sumk}{\sum\limits_{j\neq k}}
\newcommand{\qdn}{\omega_j^2+\lambda^2}

\maketitle

\begin{abstract}
    This paper is devoted to analyzing the observer convergence rate for a class of linear control systems in a Hilbert space. To characterize the polynomial stability of the observer error system, we apply the spectral theory of linear operators and explicitly construct the resolvent of the corresponding infinitesimal generator.
    The asymptotic behavior of the resolvent on the imaginary axis is studied to describe the rate of decay of the observation error.
    The estimated decay rate is illustrated through an example of an oscillating flexible structure with one-dimensional output.
\end{abstract}

\keywords{infinite-dimensional control system, observer, polynomial decay rate, spectral analysis, Riesz basis.}

{\bf MSC:} 93B53, 93C25, 93B60, 93D99.

\section{Introduction}

The present work is a continuation of the study of the observation problem for linear dynamical systems in infinite-dimensional spaces.
Once the observer is constructed for an input-output system, the convergence of the observation error becomes particularly important.
Even when the asymptotic stability of the error dynamics is established, it remains important to determine whether a desired rate of convergence can be achieved.
One of the major challenges in investigating infinite-dimensional systems is that asymptotic stability does not imply exponential stability. In particular, the rate of convergence of the solution may be polynomial. Corresponding examples of such systems can be found in~\cite{BPS2019,BH2007,CQMS2022,D2007,Fu2008,Phung2007,RZZ2005}.

In the present work, we focus on investigating the convergence rate of previously derived observation error dynamics in Hilbert space. Specifically, we establish sufficient conditions for the polynomial stability of the zero error.
It is well-known that the decay rate of solutions to linear infinite-dimensional systems is related to the growth rate of the resolvent of the infinitesimal generator along the imaginary axis of the complex plane. Studies on the resolvent behavior and related solution properties can be found, for instance, in~\cite{BD2008,Lebeau1996,Tomilov2001}.
It is noted n~\cite{BEPS2006} that the polynomial stability of a semigroup implies that the spectrum of generator
lies in the open left half-plane and that  its resolvent satisfies a polynomial growth estimate.
In the aaforementioned  work, the polynomial decay of the solutions  is characterized for linear systems with commuting normal operators in Banach space, and the results are applied to coupled systems such as conservative and damped wave equations, as well as wave and plate equations.

It is worth mentioning the work~\cite{Liu2005}, which studies the rate of decay of solutions to linear systems whose equilibrium state is stable but not necessarily exponentially stable.
The authors consider equations for which the resolvent of the semigroup generator is an unbounded operator along the imaginary axis, and provide a characterization of the order of growth of the resolvent.
Another result, based on the growth of the resolvent on the imaginary axis, is introduced in~\cite{LR2007}, where a polynomial energy decay rate is obtained for a system of wave equations.

The work~\cite{BT2010} summarizes a general approach of determining the polynomial convergence rate of solutions via analysing the rate of growth of the resolvent's norm.
Specifically, the authors of~\cite{BT2010} established the equivalence of the polynomial stability property and power asymptotics of the resolvent norm along the imaginary axis. For further results in this area, see, e.g.,~\cite{BCT2016,BCPQ2024,CST2020,DRV2024,R2023,RSS2019}.

Recall that the trivial solution of a system $\dot x(t) = Ax(t)$ in a Hilbert space $H$ is polynomially stable if there exist positive constants $\varkappa$ and $\eta$, such that
$$\|x(t)\| \leq \varkappa\, t^{-\eta} \|Ax(0)\|$$
for $t>0$, $x(0) \in D(A)\subset H$ (cf.~\cite{BEPS2006}).
We will use this concept for describing the convergence rate of the observer constructed in~\cite{ZK2023ECC}.

In current work, we prove the polynomial convergence of the error estimate for the Luenberger-type observer constructed in~\cite{ZK2023ECC}.
The main result is formulated in Section~\ref{sec:stre} and its proof is presented in Section~\ref{sec:normest}.

The structure of the rest of this paper is as follows.
In Section~\ref{sec:stre}, we introduce the basic notations and formulate the main result.
Section~\ref{sec:spcnl} provides an analysis of the spectrum of the infinitesimal generator for the considered error dynamics.
In Section~\ref{sec:res}, we construct the resolvent for the system with one-dimensional output.
Section~\ref{sec:chareq} provides a detailed analysis of the characteristic equation.
    It is shown that every eigenvalue is located in some neighborhood of a purely imaginary complex number
    corresponding to an eigenmode of the system.
In Section~\ref{sec:Riesz}, we establish the Riesz basis property for the considered class of systems and derive the transformation of the infinitesimal generator to a diagonal form.
    This transformation is utilized in the proof of the main result in
Section~\ref{sec:normest}.

In Section~\ref{sec:ex}, we propose an illustrative example of the considered class of systems and demonstrate the spectrum location.
The Appendix contains proofs of some auxiliary statements.

\section{Problem statement and main result}\label{sec:stre}

In the paper~\cite{ZK2023ECC}, an observer design was proposed for a class of linear dynamical systems that describe the controlled oscillations of flexible structures. It has been proven that the observer's error dynamics is asymptotically stable under natural observability assumptions.
The goal of the present paper is to establish conditions for the polynomial convergence of this type of observer. To achieve this, we will construct the resolvent of the semigroup generator and analyze its asymptotic behavior along the imaginary axis.
Based on this resolvent analysis, we will characterize the decay rate of the observation error under additional assumptions regarding the parameters of the control system.

We consider the following linear control system with output:
\begin{equation}\label{eq:linsys_fd}
	\dot z = {\tilde A}z + Bu, \quad z\in H=\ell^2\times \ell^2,\quad u\in\mathbb R^{k+1},\quad k\in \mathbb N,
\end{equation}
\begin{equation}\label{eq:output_fd}
	y = Cz, \quad y\in\mathbb R,
\end{equation}
where the state vector
$z=\begin{pmatrix} \xi \\ \eta \end{pmatrix}$
is defined through components ${\xi=(\xi_1,\xi_2,\dots)^\top\in \ell^2}$ and ${\eta=(\eta_1,\eta_2,\dots)^\top\in \ell^2}$,
the control $u=(u_0,u_1,\dots,u_k)^\top$ is a finite-dimensional vector.
The operators ${\tilde A}:D({\tilde A})\to H$, \linebreak
$B:\mathbb R^{k+1}\to H$,\; $C:H\to \mathbb R$ are defined as follows:
$D({\tilde A})=\left\{z=\begin{pmatrix} \xi \\ \eta \end{pmatrix}\in H:\sumf{j}\omega_j^2(\xi_j^2+\eta_j^2)<\infty\right\}$, \linebreak
${\tilde A}: z = \begin{pmatrix} \xi \\ \eta \end{pmatrix} \mapsto {\tilde A}z = \begin{pmatrix} \Omega \eta \\ -\Omega \xi \end{pmatrix}$,\;
$Bu = \begin{pmatrix} 0 \\ B_1 u \end{pmatrix}$,\; $C z = C_1 \xi$,
where
$\Omega = {\rm diag}(\omega_1,\omega_2,\dots)$,\; $B_1:\mathbb R^{k+1}\to \ell^2$,
$B_1 = \begin{pmatrix} b_{10} & \ldots & b_{1k} \\ b_{20} & \ldots & b_{2k} \\ \vdots & \vdots & \vdots \end{pmatrix}$,\;
$C_1:\ell^2\to \mathbb R$,\; $C_1 \xi = \sum\limits_{j=1}^\infty c_j\, \xi_j$.

The Luenberger-type observer for the system~\eqref{eq:linsys_fd} and~\eqref{eq:output_fd} was proposed in~\cite{ZK2023ECC} in the following form:
\begin{equation}\label{eq:observer_fd}
	\dot{\tilde z}(t) = ({\tilde A}-FC)\tilde z(t) + Bu(t) + Fy(t),
\end{equation}
where $F = \begin{pmatrix} f \\ g \end{pmatrix}:\mathbb R\to H$ is defined through its components $f,g:\mathbb R\to\ell^2$, in the form
$f = (f_1, f_2, \dots)^\top$, $g = (g_1, g_2, \dots)^\top$
with $f_j = \gamma c_j$,\; $g_j = 0$,\; $j=1,2,\dots$, and $\gamma>0$ being an arbitrary gain parameter.

In the cited work~\cite{ZK2023ECC}, conditions for observer convergence were investigated.
For this purpose, we analyzed the observation error dynamics and proved its asymptotic stability using a modification of Lyapunov's direct method (cf.~\cite{zuyev2003partial,zuyev2005stabilization,zuev2006partial}) and the invariance principle.
The focus of the current work is the estimation of the error decay rate.
This study is based on the spectral analysis of the infinitesimal generator of the error dynamics system.

Let $e(t) =  z(t) - \tilde z(t)$ be the observation error. Subtracting~\eqref{eq:observer_fd} from~\eqref{eq:linsys_fd}, we obtain the system
\begin{equation*}
	\dot e(t) = \hat A \,e(t),
\end{equation*}
where
$$
e(t) = \begin{pmatrix} \Delta(t) \\ \delta(t) \end{pmatrix}\in {\ell^2}\times {\ell^2},\;
\hat A = \begin{pmatrix} \breve A & \Omega \\ -\Omega & 0 \end{pmatrix},\;
\breve A = -\gamma
    \begin{pmatrix}
        c_1^2  & c_1c_2 & \ldots \\
        c_2c_1 & c_2^2  & \ldots \\
        \vdots & \vdots & \ddots
    \end{pmatrix}.
    $$
Introducing the new variables $q_j = \Delta_j + i\,\delta_j$,\; $p_j = \Delta_j - i\,\delta_j$, we derive the error dynamics in complex coordinates:
\begin{equation*}
    \begin{aligned}
        & \dot q_j + \dot p_j = -\gamma c_j \sumf{\iota} c_\iota (q_\iota + p_\iota) - i\,\omega_j (q_j - p_j), \\
        & \dot q_j - \dot p_j = - i\,\omega_j (q_j + p_j),
    \end{aligned} \qquad j\in\mathbb N,
\end{equation*}
or in the matrix form:
\begin{equation}\label{eq:err_dsys}
    \dot\varepsilon = A \varepsilon,
\end{equation}
where $\varepsilon = \begin{pmatrix} q \\ p \end{pmatrix}$,\; $q = (q_1, q_2, \ldots)^\top$,\; $p = (p_1, p_2, \ldots)^\top$,\;
$A = \begin{pmatrix} - i\,\Omega + \check A & \check A \\ \check A & i\,\Omega + \check A \end{pmatrix}$,\;
$\check A = \frac12 \breve A$.

The main result of this work is presented below.
\begin{theorem}\label{th:main}
    Let the following assumptions hold:
    \begin{itemize}
        \item[(A1)] The diagonal entries $\omega_j$ of $\Omega$ form an increasing sequence of distinct real numbers, i.e., $0<\omega_1<\omega_2<\ldots$, and $c_j\neq0$ for all $j\in{\mathbb N}$.
        \item[(A2)] There exists a constant $\kappa>0$ such that $\omega_{j+1}-\omega_j\geq\kappa$, $j=1,2,\ldots$\,.
        \item[(A3)] There exist constants $\alpha>0$, $\beta>0$, and $k_0 \in {\mathbb N}$, such that $|c_k|\,\omega_k^{\alpha/2} \geq \beta$ for all $k \geq k_0$.
    \end{itemize}
    Then every solution $\varepsilon(t)$,\: $t\ge 0$,\: of system~\eqref{eq:err_dsys} with $\varepsilon(0)\in D(A)$ satisfies the estimate
    \begin{equation}\label{eq:SolEst}
        \|\varepsilon(t)\| \leq \tilde\beta (t+1)^{-1/\alpha} \|A \varepsilon(0)\|\quad\text{for all}\; t\ge 0,
    \end{equation}
    with $\tilde\beta \geq \beta$.
\end{theorem}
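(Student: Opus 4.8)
The plan is to exploit the diagonalization afforded by the Riesz basis of Section~\ref{sec:Riesz} together with the eigenvalue localization of Section~\ref{sec:chareq}, and to prove the decay mode by mode rather than through a general resolvent criterion. Let $\{\phi_j\}$ be the eigenvectors of $A$ and $\{\lambda_j\}$ the corresponding eigenvalues. Since $\{\phi_j\}$ is a Riesz basis of $H$, there are constants $0<m\le M$ so that, writing $\varepsilon(0)=\sum_j a_j\phi_j$, one has $m\sum_j|a_j|^2\le\|\varepsilon(0)\|^2\le M\sum_j|a_j|^2$, and the solution of~\eqref{eq:err_dsys} is represented as $\varepsilon(t)=\sum_j a_j e^{\lambda_j t}\phi_j$. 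Applying the basis bounds to $\varepsilon(t)$ and to $A\varepsilon(0)=\sum_j a_j\lambda_j\phi_j$ gives
\begin{equation*}
    \|\varepsilon(t)\|^2 \le M\sum_j |a_j|^2 e^{2\,{\rm Re}\,\lambda_j\, t},
    \qquad
    \|A\varepsilon(0)\|^2 \ge m\sum_j |a_j|^2\,|\lambda_j|^2 .
\end{equation*}
The task then reduces to bounding $e^{2\,{\rm Re}\,\lambda_j t}$ by $(t+1)^{-2/\alpha}|\lambda_j|^2$ uniformly in $j$.

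The second ingredient is the eigenvalue localization. From the characteristic equation of Section~\ref{sec:chareq}, each $\lambda_j$ sits in a neighborhood of $\pm i\omega_j$, so $|\lambda_j|$ is comparable to $\omega_j$, while the perturbation $\check A=\tfrac12\breve A$ pushes the eigenvalue into the open left half-plane by an amount governed by the diagonal entry $-\tfrac{\gamma}{2}c_j^2$, yielding ${\rm Re}\,\lambda_j\le -c_0\,c_j^2$ for some $c_0>0$ and all large $j$. Invoking~(A3) in the form $c_j^2\ge\beta^2\omega_j^{-\alpha}$ for $j\ge k_0$, and using $|\lambda_j|\asymp\omega_j$, I would obtain
\begin{equation*}
    {\rm Re}\,\lambda_j \le -c_1\,\beta^2\,|\lambda_j|^{-\alpha}, \qquad j\ge k_0,
\end{equation*}
with $c_1>0$ independent of $j$. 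This is exactly where the polynomial rate is fixed: the prescribed decay of $c_j$ relative to $\omega_j$ forces the real parts to approach the imaginary axis no faster than $\omega_j^{-\alpha}$.

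With these two facts the estimate is obtained term by term. For $j\ge k_0$, the substitution $s=|\lambda_j|^{-\alpha}t$ gives
\begin{equation*}
    \frac{e^{2\,{\rm Re}\,\lambda_j\, t}}{|\lambda_j|^2}
    \le \frac{e^{-2c_1\beta^2|\lambda_j|^{-\alpha}t}}{|\lambda_j|^2}
    = t^{-2/\alpha}\, s^{2/\alpha}\, e^{-2c_1\beta^2 s}
    \le C^2\, t^{-2/\alpha},
\end{equation*}
where $C^2=\sup_{s>0}s^{2/\alpha}e^{-2c_1\beta^2 s}<\infty$; the finitely many modes $j<k_0$ have real parts bounded away from zero and decay exponentially, hence are dominated by the same power. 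Since the left-hand side is also bounded for small $t$ (because $|\lambda_j|\ge\omega_1$), one may pass from $t^{-2/\alpha}$ to $(t+1)^{-2/\alpha}$ for all $t\ge0$ after enlarging the constant. Summing over $j$ then yields
\begin{equation*}
    \|\varepsilon(t)\|^2
    \le M C^2 (t+1)^{-2/\alpha}\sum_j |a_j|^2|\lambda_j|^2
    \le \frac{M C^2}{m}\,(t+1)^{-2/\alpha}\,\|A\varepsilon(0)\|^2,
\end{equation*}
which is~\eqref{eq:SolEst} with $\tilde\beta=\sqrt{M C^2/m}$, enlarged if necessary to ensure $\tilde\beta\ge\beta$.

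I expect the principal difficulty to be the sharp, uniform lower bound $-{\rm Re}\,\lambda_j\ge c_0 c_j^2$. Because $A$ is a rank-structured perturbation of the skew-adjoint operator ${\rm diag}(-i\Omega,i\Omega)$, the eigenvalues are coupled through the characteristic equation, and one must show that the perturbation shifts each eigenvalue into the left half-plane by an amount comparable to $c_j^2$ with constants independent of $j$ under~(A1)--(A3); this is the role of the explicit resolvent of Section~\ref{sec:res} and the localization of Section~\ref{sec:chareq}. A secondary technical point is controlling the Riesz constants $m,M$ and the comparability $|\lambda_j|\asymp\omega_j$ throughout, so that the resulting constant $\tilde\beta$ is clean. (Alternatively, the resolvent norm estimate feeding Section~\ref{sec:chareq} could be combined with the equivalence of~\cite{BT2010}, but the Riesz basis makes the direct route above more transparent.)
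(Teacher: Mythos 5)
Your proof is essentially correct, but it takes a genuinely different route in its final step from the one the paper uses. The paper also diagonalizes $A$ via the Riesz basis (Lemma~\ref{le:Riesz}, Corollary~\ref{crl:D}), but it then proves a resolvent bound $\|R(is;G)\|={\cal O}(|s|^\alpha)$ along the imaginary axis (Lemma~\ref{le:ResNmBds}) and invokes the Borichev--Tomilov equivalence theorem \cite[Theorem 2.4]{BT2010} to convert that bound into $\|e^{tG}G^{-1}\|={\cal O}(t^{-1/\alpha})$, before transporting the estimate back through $Q$. You instead bypass \cite{BT2010} entirely: expanding in the eigenbasis, using the uniform bound ${\rm Re}\,\lambda_j\leq -c_1\beta^2|\lambda_j|^{-\alpha}$, and optimizing $s^{2/\alpha}e^{-2c_1\beta^2 s}$ gives the polynomial decay mode by mode. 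Because $G$ is diagonal in a Riesz basis, this elementary computation is legitimate and self-contained, and it even produces an explicit constant; what it loses is the robustness of the resolvent route, which does not require diagonalizability and fits the paper's framework where the resolvent had already been constructed. Note also that both arguments rest on exactly the same two pillars: the Riesz basis, and the separation $|{\rm Re}\,\lambda_k|\geq\tfrac12|{\rm Re}\,\lambda_k^*|$ with $|{\rm Re}\,\lambda_k^*|$ comparable to $c_k^2$, which in the paper comes from Lemma~\ref{le:R} together with the explicit formula for ${\rm Re}\,\lambda_k^*$ in the proof of Proposition~\ref{prp:O}.

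One caveat: your justification of the crucial bound ${\rm Re}\,\lambda_j\leq -c_0 c_j^2$ (``the perturbation pushes the eigenvalue left by its diagonal entry $-\tfrac{\gamma}{2}c_j^2$'') is a heuristic, not a proof, as you yourself acknowledge; in the paper this is precisely the content of Lemma~\ref{le:R} and Proposition~\ref{prp:O}, so your argument is complete only when combined with those results. Be aware as well that what is needed here is a \emph{lower} bound on $|{\rm Re}\,\lambda_k^*|$, whereas Proposition~\ref{prp:O} is stated as an upper estimate ${\cal O}(c_k^2)$; the required two-sided comparability does follow from the explicit expression for ${\rm Re}\,\lambda_k^*$ (since $\omega_k\sum_{j\neq k}c_j^2/(\omega_j^2-\omega_k^2)$ stays bounded under (A2) and $c\in\ell^2$), but it should be stated as such rather than inferred from the ${\cal O}$-symbol --- a point on which your write-up and the paper's own proof of Lemma~\ref{le:ResNmBds} share the same minor imprecision.
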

The proof of Theorem~\ref{th:main} will be provided in Section~4, based on the auxiliary results presented in Section~3.

\section{Spectral analysis of the infinitesimal generator}\label{sec:spcnl}
\subsection{Construction of the the resolvent in complex Hilbert space}\label{sec:res}

Let us construct the resolvent of the operator $A$. For that we will solve the equation
\begin{equation}\label{eq:res_eq}
    (A-\lambda\rm{I})\varepsilon = \hat\varepsilon
\end{equation}
with respect to $\varepsilon$ for any $\hat\varepsilon\in H$ for all $\lambda\in\mathbb C\setminus\{0\}$. Here and further, the symbol $\rm{I}$ denotes the identity operator of the corresponding dimension.

We will show that the resolvent of $A$ is well-defined for such $\lambda \in\mathbb C \setminus\{\pm\,i\,\omega_j\}$ that
\begin{equation}\label{eq:res_cs0}
    \gamma\lambda \sumf{j}\frac{c_j^2}\qdn + 1 \neq 0,
\end{equation}
and for $\lambda=\pm i\,\omega_k$, $k\in{\mathbb N}$.

First, let $\lambda\neq\pm\,i\,\omega_j$, $j\in{\mathbb N}$. Denote $\phi(q,p) = \sum\limits_{\iota=1}^\infty c_\iota (q_\iota + p_\iota)$. Equation~\eqref{eq:res_eq} can be represented as the following system:
\begin{equation*}
    \begin{aligned}
        -(i\,\omega_j + \lambda)\,q_j & = \frac\gamma2\,c_j\,\phi(q,p) + \hat q_j, \\
         (i\,\omega_j - \lambda)\,p_j & = \frac\gamma2\,c_j\,\phi(q,p) + \hat p_j,
    \end{aligned} \qquad j\in\mathbb N.
\end{equation*}
In order to obtain the coupling equation for the functional $\phi$, we add the above two equations, multiply the result by $c_j$ and sum over $j=1,2,\ldots$, and come up with the following equation:
\begin{equation*}
    \left(1 + \gamma\lambda \sumf{j}\frac{c_j^2}\qdn\right)\phi = \sumf{j}\frac{c_j}\qdn\,\bigl((i\,\omega_j-\lambda)\hat q_j - (i\,\omega_j+\lambda)\hat p_j\bigr).
\end{equation*}
Provided by conditions~\eqref{eq:res_cs0} and $\lambda\neq\pm\,i\,\omega_j$, we can express $\phi$ as follows:
\begin{equation}\label{eq:res_cs0_phi}
    \phi = \frac{\sumf{j}\dfrac{c_j}\qdn\,\bigl((i\,\omega_j-\lambda)\hat q_j - (i\,\omega_j+\lambda)\hat p_j\bigr)}{1+\gamma\lambda\sumf{j}\dfrac{c_j^2}\qdn}.
\end{equation}
Then the resolvent of the operator $A$ can be represented as the following operator:
$$R(\lambda;A) = (A-\lambda\rm{I})^{-1}: \hat\varepsilon = \begin{pmatrix} \hat q \\ \hat p \end{pmatrix} \mapsto R \hat\varepsilon = \begin{pmatrix} q\,(\hat q,\hat p) \\ p\,(\hat q,\hat p) \end{pmatrix}$$
with
\begin{equation*}
    q_j = -\frac1{i\,\omega_j+\lambda}\,\left(\frac\gamma2\, c_j\phi + \hat q_j\right), \quad
    p_j =  \frac1{i\,\omega_j-\lambda}\,\left(\frac\gamma2\, c_j\phi + \hat p_j\right)
\end{equation*}
and $\phi=\phi\,(\hat q,\hat p)$ given in~\eqref{eq:res_cs0_phi}.

Now let us consider the cases $\lambda=\pm\,i\,\omega_j$ for some $j=k$.

\textbf{(A)} $\lambda=i\,\omega_k$. \\
In this case, equation~\eqref{eq:res_eq} can be represented as system
\begin{align*}
    & -\frac12\gamma c_k \sumf{\iota} c_\iota(q_\iota+p_\iota) = \hat p_k, \\ 
    & -\frac12\gamma c_j \sumf{\iota} c_\iota(q_\iota+p_\iota) - (i\omega_j+\lambda)\,q_j = \hat q_j, \; j\in\mathbb N, \\ 
    & -\frac12\gamma c_j \sumf{\iota} c_\iota(q_\iota+p_\iota) + (i\omega_j-\lambda)\,p_j = \hat p_j, \; j\in\mathbb N\setminus\{k\}, 
\end{align*}
from which the resolvent $R(\lambda;A)$ for $\lambda=i\,\omega_k$ is defined as
$R: \: \hat\varepsilon = \begin{pmatrix} \hat q \\ \hat p \end{pmatrix} \mapsto R \hat\varepsilon = \begin{pmatrix} q\,(\hat q,\hat p) \\ p\,(\hat q,\hat p) \end{pmatrix}$, where
\begin{align*}
    & q_j = -\frac1{i\omega_j+\lambda} \left(\hat q_j - \frac{c_j}{c_k}\, \hat p_k\right), \; j\in\mathbb N, \\
    & p_j = \frac1{i\omega_j-\lambda} \left(\hat p_j - \frac{c_j}{c_k}\, \hat p_k\right), \; j\in\mathbb N\setminus\{k\}, \\
    \text{and} \;\; & p_k = -\frac1{c_k}\left(\frac2{\gamma c_k}\hat p_k + \sumf{\iota} c_\iota q_\iota + \sum\limits_{\iota\neq k} c_\iota p_\iota\right).
\end{align*}

\textbf{(B)} $\lambda=-i\,\omega_k$. \\
In this instance, equation~\eqref{eq:res_eq} reads as
\begin{align*}
    & -\frac12\gamma c_k \sumf{\iota} c_\iota(q_\iota+p_\iota) = \hat q_k, \\ 
    & -\frac12\gamma c_j \sumf{\iota} c_\iota(q_\iota+p_\iota) - (i\omega_j+\lambda)\,q_j = \hat q_j, \; j\in\mathbb N\setminus\{k\}, \\ 
    & -\frac12\gamma c_j \sumf{\iota} c_\iota(q_\iota+p_\iota) + (i\omega_j-\lambda)\,p_j = \hat p_j, \; j\in\mathbb N. 
\end{align*}
The resolvent $R(\lambda;A)$ for $\lambda=-i\,\omega_k$ is then defined as
$R: \: \varepsilon = \begin{pmatrix} \hat q \\ \hat p \end{pmatrix} \mapsto R \hat\varepsilon = \begin{pmatrix} q\,(\hat q,\hat p) \\ p\,(\hat q,\hat p) \end{pmatrix}$, where
\begin{align*}
    & q_j = -\frac1{i\omega_j+\lambda} \left(\hat q_j - \frac{c_j}{c_k}\hat q_k\right), \; j\in\mathbb N\setminus\{k\}, \\
    & p_j = \frac1{i\omega_j-\lambda} \left(\hat p_j - \frac{c_j}{c_k}\, \hat q_k\right), \; j\in\mathbb N, \\
    & q_k = -\frac1{c_k} \left(\frac2{\gamma c_k}\, \hat q_k + \sumf{\iota} c_\iota p_\iota + \sum\limits_{\iota\neq k} c_\iota q_\iota\right).
\end{align*}


Thus, the resolvent of the operator $A$ is constructed for the system with one-dimensional output.

\subsection{Analysis of the characteristic equation}\label{sec:chareq}
Let us rewrite the characteristic equation
$\sumf{j}\dfrac{c_j^2}\qdn + \dfrac1{\gamma\lambda} = 0$
in the form
\begin{equation}\label{eq:chareq}
    f(\lambda) := \sumf{j} \frac{c_j^2}{\omega_j} \left(\frac1{\lambda-i\omega_j} - \frac1{\lambda+i\omega_j}\right) + \frac{2i}{\gamma\lambda} = 0.
\end{equation}
Furthermore, we will show that there are no eigenvalues in ${\mathbb C}\setminus \bigcup\limits_{j\in{\mathbb N}}B(\pm i\omega_j; \breve R_\lambda)$, where $B(\pm i\omega_j; \breve R_\lambda)$ are the closed disks centered at $\pm i\omega_j$ with radii
\begin{equation}\label{Rlambda}
    \breve R_\lambda = \dfrac\gamma2 |\lambda|\,\sumf{j}\dfrac{c_j^2}{\omega_j}.
\end{equation}
The properties of the spectrum of $A$ are summarized below.

\begin{proposition}\label{prp:nlfl}
    The spectrum $\sigma(A)$ of the operator $A$ is discrete and satisfies the following properties:
\begin{enumerate}
    \item $\sigma(A) \,\cap\, i{\mathbb R} = \emptyset$;
    \item the set of eigenvalues of $A$ is symmetric with respect to the real line;
    \item $\inf\limits_{j\in{\mathbb N}} |\lambda\pm i\omega_j| \leq \breve R_\lambda$ for each eigenvalue $\lambda$,
    where $\breve R$ is defined in~\eqref{Rlambda}.
\end{enumerate}
    \end{proposition}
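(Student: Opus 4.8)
The plan is to reduce all three claims to an analysis of the scalar function
$$g(\lambda) := 1 + \gamma\lambda\sumf{j}\frac{c_j^2}{\qdn},$$
whose zeros, for $\lambda\notin\{0\}\cup\{\pm i\omega_j\}$, are precisely the values at which the resolvent formula of Subsection~\ref{sec:res} fails to define a bounded operator, i.e. the eigenvalues of $A$ outside $\{0,\pm i\omega_j\}$; observe that $g(\lambda)=0$ is equivalent to the characteristic equation~\eqref{eq:chareq}. Before treating the three properties I would dispose of the exceptional points and establish discreteness. The explicit resolvents built in cases~\textbf{(A)} and~\textbf{(B)} show that every $\lambda=\pm i\omega_k$ belongs to the resolvent set, while adding and subtracting the two block rows of $A\varepsilon=0$ together with the invertibility of $\Omega$ gives $\ker A=\{0\}$, so $0\notin\sigma(A)$. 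For discreteness I would write $A = A_0 + P$ with $A_0=\mathrm{diag}(-i\Omega,\,i\Omega)$ and $P=\begin{pmatrix}\check A & \check A\\ \check A & \check A\end{pmatrix}$: here $P$ is bounded (each block is the rank-one map $\check A\colon x\mapsto -\tfrac{\gamma}{2}\,c\,\langle c,x\rangle$, using $c\in\ell^2$), whereas $A_0$ has compact resolvent since $\omega_j\to\infty$ by~(A2). As the construction of Subsection~\ref{sec:res} already exhibits points of the resolvent set, a bounded perturbation preserves compactness of the resolvent, so $\sigma(A)$ is discrete and consists of isolated eigenvalues of finite multiplicity.

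Property~1 then follows by substituting $\lambda = is$ with $s\in\mathbb{R}\setminus\{0,\pm\omega_j\}$: the sum $\sumf{j}\frac{c_j^2}{\omega_j^2-s^2}$ is real, so that $\mathrm{Re}\,g(is)=1\neq 0$ and $is$ cannot be an eigenvalue; combined with $0\notin\sigma(A)$ and $\pm i\omega_k\notin\sigma(A)$ this yields $\sigma(A)\cap i\mathbb{R}=\emptyset$. Property~2 is immediate from the reality of the coefficients $\gamma,c_j,\omega_j$, which gives $g(\bar\lambda)=\overline{g(\lambda)}$, so the zeros occur in complex-conjugate pairs.

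The substance of the argument is Property~3. Starting from $\gamma\lambda\sumf{j}\frac{c_j^2}{\qdn}=-1$, I would pass to moduli and use the triangle inequality to get $1\le\gamma|\lambda|\sumf{j}\frac{c_j^2}{|\qdn|}$, then factor $|\qdn|=|\lambda-i\omega_j|\,|\lambda+i\omega_j|$. Writing $\rho$ for the distance from $\lambda$ to the set $\{\pm i\omega_j\}$, the elementary bound $\max(|\lambda-i\omega_j|,|\lambda+i\omega_j|)\ge\omega_j$ (the points $\pm i\omega_j$ are a distance $2\omega_j$ apart) gives $|\qdn|\ge\rho\,\omega_j$, whence $1\le\frac{\gamma|\lambda|}{\rho}\sumf{j}\frac{c_j^2}{\omega_j}$, i.e. a bound on $\rho$ in terms of $\gamma|\lambda|\sumf{j}\frac{c_j^2}{\omega_j}$ that recovers the localization radius $\breve R_\lambda$ of~\eqref{Rlambda}. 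I expect the main obstacle to be this modulus estimate: one must verify convergence of $\sumf{j}\frac{c_j^2}{\omega_j}$ (finite, since $c\in\ell^2$ and $\omega_j\ge\omega_1>0$) so that $\breve R_\lambda<\infty$, and, more delicately, control the product $|\lambda-i\omega_j|\,|\lambda+i\omega_j|$ carefully enough to produce the precise numerical constant in~\eqref{Rlambda} rather than a crude multiple of it.
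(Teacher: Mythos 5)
Your proposal follows the paper's proof in all essentials: the spectrum is reduced to the zero set of the scalar characteristic function, the exceptional points $0$ and $\pm i\omega_k$ are disposed of via the explicit resolvent formulas, property 1 follows because the characteristic function (your $g$) has real part $1$ on the punctured imaginary axis, property 2 from the reality of $\gamma$, $c_j$, $\omega_j$, and property 3 from a modulus estimate after factoring $\omega_j^2+\lambda^2=(\lambda-i\omega_j)(\lambda+i\omega_j)$. Where you differ, you improve on the paper: its discreteness argument is a two-line assertion (the resolvent exists off the zero set; zeros of an analytic function do not accumulate), whereas your decomposition $A=A_0+P$, with $P$ bounded of rank one and $A_0$ of compact resolvent, combined with nonemptiness of $\rho(A)$ from the explicit construction, gives discreteness and finite algebraic multiplicity by a standard perturbation theorem; also, the paper's proof silently omits property 2, which your conjugation argument supplies.

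The only place you fall short of the literal statement is the constant in property 3, and your suspicion about it is exactly right --- but the gap is the paper's, not yours. The triangle inequality honestly yields $\inf_j|\lambda\pm i\omega_j|\le \gamma|\lambda|\sum_j c_j^2/\omega_j = 2\breve R_\lambda$. The paper gets $\breve R_\lambda$ by bounding $\left|\frac1{\lambda-i\omega_j}-\frac1{\lambda+i\omega_j}\right| = \frac{2\omega_j}{|\lambda-i\omega_j|\,|\lambda+i\omega_j|}$ by $\max\left\{\sup_j\frac1{|\lambda-i\omega_j|},\,\sup_j\frac1{|\lambda+i\omega_j|}\right\}$, which in effect requires $\max\bigl(|\lambda-i\omega_j|,|\lambda+i\omega_j|\bigr)\ge 2\omega_j$; the triangle inequality guarantees only $\ge\omega_j$, and at an eigenvalue near $i\omega_k$ the decisive term $j=k$ has $|\lambda+i\omega_k|\approx 2\omega_k$, so the missing factor of $2$ is exactly what is in question. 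Indeed the sharp constant fails: in the one-mode reduction the eigenvalue solves $\lambda^2+\gamma c_1^2\lambda+\omega_1^2=0$, whence $|\lambda|=\omega_1$, $\breve R_\lambda=\gamma c_1^2/2$, and $|\lambda-i\omega_1|^2=\tfrac{\gamma^2c_1^4}{4}+\bigl(\omega_1-\sqrt{\omega_1^2-\gamma^2c_1^4/4}\,\bigr)^2>\breve R_\lambda^2$, a strict excess that persists when further modes with sufficiently small $c_j$ are added. So your ``crude multiple'' is not a defect of your argument: property 3 is provable, by your estimate or by the paper's once repaired, only with $2\breve R_\lambda$, i.e.\ with the factor $\tfrac\gamma2$ in~\eqref{Rlambda} replaced by $\gamma$. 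Nothing downstream is affected, since the localization radius enters only qualitatively.
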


The proof is presented in the Appendix. We refer the reader to Section~\ref{sec:ex} for an illustration of the eigenvalue distribution.

Consider the function
\begin{equation*}
    \begin{aligned}
        F(\lambda)
        & = (\lambda-i\omega_k) f(\lambda) \\
        & = (\lambda-i\omega_k) \sumk \frac{c_j^2}{\omega_j} \left(\frac1{\lambda-i\omega_j} - \frac1{\lambda+i\omega_j}\right) + (\lambda-i\omega_k) \frac{2i}{\gamma\lambda} + \frac{2i c_k^2}{\lambda+i\omega_k},
    \end{aligned}
\end{equation*}
which is analytic in a neighbourhood of $i\omega_k$ and thus admits the power series expansion at~$\lambda=i\omega_k$:

\begin{equation*}
    F(\lambda) = F(i\omega_k) + (\lambda-i\omega_k)\,F'(i\omega_k) + r(\lambda),
\end{equation*}
where $r(\lambda) = \sum\limits_{n=2}^\infty \dfrac{F^{(n)}(i\omega_k)}{n!} (\lambda-i\omega_k)^n$ is the remainder term.

We denote the linear part of the above power series by $g(\lambda) = F(i\omega_k) + (\lambda-i\omega_k)\,F'(i\omega_k)$ and calculate its root
\begin{equation*}
    \begin{aligned}
        \lambda_k^*
        & = -\frac{F(i\omega_k)}{F'(i\omega_k)} + i\omega_k \\
        & = -\frac{c_k^2}{\omega_k} \left[2i\sumk \frac{c_j^2}{\omega_j^2-\omega_k^2} + \frac{ic_k^2}{2\omega_k^2} + \frac2{\gamma\omega_k}\right]^{-1} + i\omega_k.
    \end{aligned}
\end{equation*}
Note that ${\rm Re}\,\lambda_k^*<0$.

We assume that $\omega_{k+1}-\omega_k > \omega_k-\omega_{k-1} > 0$ and denote by $R_0$ the radius of convergence of the above power series,
$R_0 = \min\limits_{j\neq k}{\rm dist}(i\omega_k,i\omega_j) = \omega_k-\omega_{k-1}>0$.

The remainder $r(\lambda)$ is convergent series in the disk $\Gamma_0 = \{\lambda \in {\mathbb C}: |\lambda-i\omega_k| < R_0\}$. It can be presented in the form
$$r(\lambda)=(\lambda-i\omega_k)^2\rho(\lambda),$$
where $\rho(\lambda)$ is analytical function in the disk $\Gamma_1 = \{\lambda \in {\mathbb C}: |\lambda-i\omega_k|\leq R_1\}$ with $R_1<R_0$. Then $\rho(\lambda)$ is bounded in $\Gamma_1$.


The following lemma shows that local behaviour of the characteristic function $f(\lambda)$ is determined by the linear function $g(\lambda)$.

\begin{lemma}\label{le:Rouche}
    Define an open disk $\Gamma = \{\lambda \in {\mathbb C}: |\lambda-\lambda_k^*| < R_k\}$.
    Assume that $\Gamma \subset \Gamma_1$.
    Let $M=\sup\limits_{|\lambda-i\omega_k|\leq R_1}|\rho(\lambda)|$ and
    \begin{equation}\label{eq:Mneq1}
        0< M <\dfrac{|F'(i\omega_k)|^2}{4|F(i\omega_k)|}.
    \end{equation}
    If $\sqrt{R_k} \in {\cal I} = \left(b-\sqrt{b^2-c}\:;\: b+\sqrt{b^2-c}\,\right)$, where $b=\sqrt{\dfrac{|F'(i\omega_k)|}{4M}}$,\; $c=|\lambda_k^*-i\omega_k|=\left|\dfrac{F(i\omega_k)}{F'(i\omega_k)}\right|$,
    then the function $f(\lambda)$ has exactly one zero in the open neighborhood $\Gamma$.
\end{lemma}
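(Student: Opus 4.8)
The plan is to prove this by Rouché's theorem, with the affine function $g(\lambda) = F(i\omega_k) + (\lambda - i\omega_k)F'(i\omega_k)$ playing the role of the dominant term and the remainder $r(\lambda) = F(\lambda) - g(\lambda) = (\lambda - i\omega_k)^2\rho(\lambda)$ treated as a small perturbation on the boundary circle $\partial\Gamma = \{\lambda : |\lambda - \lambda_k^*| = R_k\}$. Since $F(\lambda) = (\lambda - i\omega_k) f(\lambda)$, the functions $f$ and $F$ share the same zeros away from $i\omega_k$; and because $F(i\omega_k) = c_k^2/\omega_k \neq 0$ by assumption (A1), the point $i\omega_k$ is never a zero of $F$. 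Consequently it suffices to show that $F$ has exactly one zero in $\Gamma$, and I would reduce the whole statement to this count.

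First I would record that $g$ is affine with its unique root at the centre $\lambda_k^*$ of $\Gamma$, so that $g(\lambda) = F'(i\omega_k)(\lambda - \lambda_k^*)$; in particular $g$ has precisely one zero inside $\Gamma$, and $|g(\lambda)| = |F'(i\omega_k)|\,R_k$ holds \emph{exactly} on $\partial\Gamma$. Next I would estimate the perturbation on the same circle: the triangle inequality gives $|\lambda - i\omega_k| \le |\lambda - \lambda_k^*| + |\lambda_k^* - i\omega_k| = R_k + c$, while $|\rho(\lambda)| \le M$ because $\Gamma \subset \Gamma_1$ by hypothesis. Hence $|r(\lambda)| \le M\,(R_k + c)^2$ on $\partial\Gamma$. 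The Rouché inequality I must verify is therefore $M\,(R_k + c)^2 < |F'(i\omega_k)|\,R_k$.

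The decisive step is to show that the hypotheses $\sqrt{R_k}\in{\cal I}$ together with \eqref{eq:Mneq1} are \emph{exactly} equivalent to this strict inequality. Writing $t = \sqrt{R_k}$ and using $b^2 = |F'(i\omega_k)|/(4M)$, the inequality $M(R_k + c)^2 < |F'(i\omega_k)|R_k$ becomes $(t^2 + c)^2 < 4b^2 t^2$, i.e.\ (taking positive square roots) $t^2 - 2b t + c < 0$; the solution set of this quadratic is the open interval $\bigl(b - \sqrt{b^2 - c}\,,\, b + \sqrt{b^2 - c}\,\bigr) = {\cal I}$, which is nonempty precisely when $b^2 > c$. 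Substituting $c = |F(i\omega_k)|/|F'(i\omega_k)|$ shows that $b^2 > c$ is exactly condition \eqref{eq:Mneq1}. I expect this algebraic translation to be the main point of the argument: it is the only place where all three quantities $M$, $b$, $c$ must be reconciled, and one has to be careful that both sides are positive before passing to square roots.

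Finally, with $|F - g| = |r| < |g|$ established on $\partial\Gamma$, Rouché's theorem yields that $F$ and $g$ have the same number of zeros (counted with multiplicity) inside $\Gamma$, namely one. Since $F(i\omega_k) \neq 0$, this unique zero differs from $i\omega_k$ and is therefore a genuine zero of $f$; conversely every zero of $f$ in $\Gamma$ is a zero of $F$. Hence $f$ has exactly one zero in the neighbourhood $\Gamma$, as claimed.
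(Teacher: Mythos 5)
Your proposal is correct and follows essentially the same route as the paper's own proof: Rouch\'e's theorem applied to $F$ and its affine part $g$ on $\partial\Gamma$, with $|g|=|F'(i\omega_k)|R_k$ on the circle, the triangle-inequality bound $|\lambda-i\omega_k|\le R_k+c$, and the reduction of the hypothesis $\sqrt{R_k}\in{\cal I}$ to the quadratic inequality $t^2-2bt+c<0$. Your version is in fact slightly more explicit than the paper's at two points --- you spell out the algebra showing that \eqref{eq:Mneq1} is exactly $b^2>c$, and you transfer the zero count from $F$ to $f$ via $F(i\omega_k)=c_k^2/\omega_k\neq 0$ rather than the paper's implicit assumption that $i\omega_k\notin\Gamma$ --- but these are refinements of detail, not a different argument.
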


\begin{figure}[thpb]
    \centering
    \includegraphics[scale=.8]{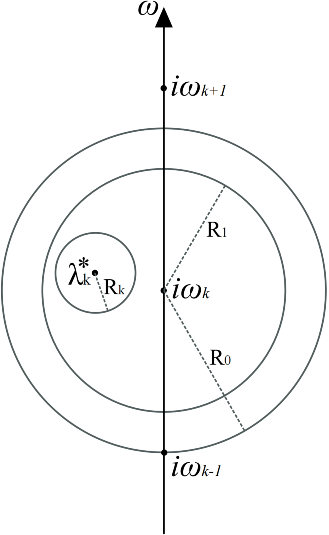}
    \caption{Schematic representation of the eigenvalues location}
	\label{fig:Eigloc}
\end{figure}

\begin{proof}
    First we prove that $|r(\lambda)| < |g(\lambda)|$ for all $\lambda$ on the contour $\partial\Gamma=\{\lambda\in{\mathbb C}: |\lambda-\lambda_k^*|=R_k\}$.

    Since $g(\lambda_k^*)=0$, the linear function $g(\lambda)$ can be presented as
    $$g(\lambda) = g'(\lambda_k^*)(\lambda-\lambda_k^*) = F'(i\omega_k)(\lambda-\lambda_k^*),$$
    so $|g(\lambda)|=|F'(i\omega_k)|\,R_k$ on the contour $\partial\Gamma$.

    For $\lambda$ in the disk $\Gamma_1$ we have
    $|r(\lambda)| \leq |\lambda-i\omega_k|^2\,M.$
    Thus, the desired inequality takes the form
    \begin{equation}\label{eq:Rneq1}
        |\lambda-i\omega_k| < \sqrt{\frac{R_k}M |F'(i\omega_k)|}.
    \end{equation}
    By the triangle inequality applied on $\partial\Gamma$,
    $$|\lambda-i\omega_k| \leq R_k + |\lambda_k^*-i\omega_k| = R_k + \left|\frac{F(i\omega_k)}{F'(i\omega_k)}\right|.$$
    If constants $R$ and $M$ fulfill conditions of the lemma, the following inequality is satisfied:
    $$R_k + \left|\frac{F(i\omega_k)}{F'(i\omega_k)}\right| < \sqrt{\frac{R_k}M\,|F'(i\omega_k)|},$$
    which implies~\eqref{eq:Rneq1}.

    Now, the conditions of the Rouch{\'e} theorem are satisfied for functions $F(\lambda)$ and $g(\lambda)$, so these functions have the same number of zeros in the disk~$\Gamma$. Since $\lambda-i\omega_k \neq 0$ in this disk, we conclude that the characteristic function $f(\lambda)$ has exactly one zero in~$\Gamma$.
\end{proof}

Thus we have shown that under assumptions of Lemma~\ref{le:Rouche} eigenvalue $\lambda_k$ lies inside the neighborhood $\Gamma$ of the value $\lambda_k^*$.
Further we investigate the size of circle $\Gamma$, namely, under what assumptions its radius $R_k$ does not exceed $\Theta|{\rm Re}\lambda_k^*|$, $\Theta\in(0;1)$, so that all eigenvalues are separated from the imaginary axis.
For simplicity and without loss of generality, let us consider $\Theta=\frac12$.

First we will show that inequality $R_k \leq \frac12|{\rm Re}\lambda_k^*|$ is not guaranteed by $\sqrt{R_k} \in {\cal I}$ because $\frac12|{\rm Re}\lambda_k^*| = \dfrac{c_k^2}{\gamma \omega_k^2\,|F'(i\omega_k)|^2}$ does not exceed $\left(b+\sqrt{b^2-c}\right)^2$. In fact, the following proposition states even stronger inequality.

\begin{proposition}\label{prp:spr}
    Let $M$ and $b$ satisfy assumptions of Lemma~\ref{le:Rouche}. Then
    $\frac12|{\rm Re}\lambda_k^*| < b^2$.
\end{proposition}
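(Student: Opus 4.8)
The plan is to reduce the claim to two short facts about the quantities $b$ and $c$ introduced in Lemma~\ref{le:Rouche}, bypassing any explicit evaluation of ${\rm Re}\,\lambda_k^*$. First I would record what the admissibility condition~\eqref{eq:Mneq1} on $M$ actually says about $b$ and $c$. Since
\[
  b^2=\frac{|F'(i\omega_k)|}{4M},\qquad c=\left|\frac{F(i\omega_k)}{F'(i\omega_k)}\right|=\frac{|F(i\omega_k)|}{|F'(i\omega_k)|},
\]
the inequality $M<\dfrac{|F'(i\omega_k)|^2}{4|F(i\omega_k)|}$ rearranges directly into $\dfrac{|F'(i\omega_k)|}{4M}>\dfrac{|F(i\omega_k)|}{|F'(i\omega_k)|}$, i.e. $b^2>c$. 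Thus condition~\eqref{eq:Mneq1} is exactly the statement that $b^2$ strictly dominates $c$; this is, of course, the same inequality $b^2-c>0$ that makes the interval ${\cal I}$ nonempty.

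Next I would bound $|{\rm Re}\,\lambda_k^*|$ by $c$ geometrically. By the definition of $\lambda_k^*$ we have $\lambda_k^*-i\omega_k=-\dfrac{F(i\omega_k)}{F'(i\omega_k)}$, so $|\lambda_k^*-i\omega_k|=c$. Because $i\omega_k$ is purely imaginary, the real parts of $\lambda_k^*$ and of $\lambda_k^*-i\omega_k$ coincide, and the elementary estimate $|{\rm Re}\,w|\le|w|$ applied to $w=\lambda_k^*-i\omega_k$ yields $|{\rm Re}\,\lambda_k^*|\le c$.

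Combining the two facts then closes the argument immediately. Noting $c>0$ (valid since $c_k\neq0$ by (A1), $\omega_k>0$, and $F'(i\omega_k)\neq0$ because its real part equals $\frac{2}{\gamma\omega_k}\neq0$), one obtains
\[
  \tfrac12|{\rm Re}\,\lambda_k^*|\le\tfrac12 c<c<b^2,
\]
which is the assertion. I do not expect a genuine obstacle here: the only point worth noticing is that the explicit value $\frac12|{\rm Re}\,\lambda_k^*|=\frac{c_k^2}{\gamma\omega_k^2|F'(i\omega_k)|^2}$ quoted just before the proposition need not be used at all, since the modulus bound $|{\rm Re}\,w|\le|w|$ already gives $|{\rm Re}\,\lambda_k^*|\le c$. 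Should one instead insist on arguing through that explicit formula, the single extra ingredient is $|F'(i\omega_k)|\ge\frac{2}{\gamma\omega_k}$ (the modulus of $F'(i\omega_k)$ is at least its real part), whence $\gamma\omega_k|F'(i\omega_k)|\ge 2$ and $\frac12|{\rm Re}\,\lambda_k^*|\le c$ once more.
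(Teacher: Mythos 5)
Your proof is correct, and it takes a genuinely different — and more economical — route than the paper's. The paper evaluates $\tfrac12|{\rm Re}\,\lambda_k^*|$ explicitly as $\dfrac{c_k^2}{\gamma\omega_k^2|F'(i\omega_k)|^2}$, rewrites \eqref{eq:Mneq1} (via $|F(i\omega_k)|=c_k^2/\omega_k$) as $4M<\omega_k|F'(i\omega_k)|^2/c_k^2$, and reduces the claim to $4M<\gamma\omega_k^2|F'(i\omega_k)|^3/c_k^2$, which it then settles by extracting the estimate $\gamma\omega_k|F'(i\omega_k)|>1$ from the explicit form of $|F'(i\omega_k)|^2$. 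You bypass the explicit evaluation entirely: the identity ${\rm Re}\,\lambda_k^*={\rm Re}\,(\lambda_k^*-i\omega_k)$ together with $|{\rm Re}\,w|\le|w|$ gives $|{\rm Re}\,\lambda_k^*|\le c$, and \eqref{eq:Mneq1} rearranges (all quantities being positive, since $c_k\neq0$ by (A1)) into exactly $b^2>c$; combining these yields $\tfrac12|{\rm Re}\,\lambda_k^*|\le\tfrac12 c<b^2$, and in fact the stronger conclusion $|{\rm Re}\,\lambda_k^*|<b^2$ without the factor $\tfrac12$. The one thing the paper's computation buys that your main chain does not is the by-product $\gamma\omega_k|F'(i\omega_k)|>1$, which is explicitly reused later in the proof of Lemma~\ref{le:R}; your closing remark shows you can recover it anyway, since $|F'(i\omega_k)|\ge{\rm Re}\,F'(i\omega_k)=2/(\gamma\omega_k)$ gives $\gamma\omega_k|F'(i\omega_k)|\ge2$, so nothing downstream is lost. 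What your argument buys in exchange is robustness and brevity: it uses only the definition $\lambda_k^*-i\omega_k=-F(i\omega_k)/F'(i\omega_k)$, positivity of the quantities involved, and the modulus bound $|{\rm Re}\,w|\le|w|$, so it is insensitive to the concrete form of $F$.
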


\begin{lemma}\label{le:R}
    Let constants $M$, $b$ and $c$ be defined as in Lemma~\ref{le:Rouche}.
    If
    \begin{equation}\label{eq:Mneq2}
        M < \dfrac{\gamma \omega_k |F'(i\omega_k)|^3}{c_k^2 (\gamma \omega_k |F'(i\omega_k)| + 1)^2},
    \end{equation}
    then $R_k \leq \frac12|{\rm Re}\lambda_k^*|$ for all $\omega_k > 1$.
\end{lemma}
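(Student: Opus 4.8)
The plan is to reduce the inequality $R_k \le \tfrac12|\mathrm{Re}\,\lambda_k^*|$ to a membership statement about the interval $\cal I$ from Lemma~\ref{le:Rouche} and then verify it by a direct estimate. Writing $s=\sqrt{\tfrac12|\mathrm{Re}\,\lambda_k^*|}$, a radius $R_k$ is admissible precisely when $\sqrt{R_k}\in{\cal I}=\left(b-\sqrt{b^2-c}\:;\:b+\sqrt{b^2-c}\right)$, so it suffices to show that $s$ itself lies in $\cal I$: then $R_k=s^2=\tfrac12|\mathrm{Re}\,\lambda_k^*|$ is an admissible choice, and the claimed bound is attained. By Proposition~\ref{prp:spr} we already have $\tfrac12|\mathrm{Re}\,\lambda_k^*|<b^2$, i.e. $0<s<b\le b+\sqrt{b^2-c}$, so the upper endpoint imposes no restriction; the whole argument therefore rests on the lower-endpoint inequality $b-\sqrt{b^2-c}<s$. (The interval is nonempty because $b^2>c$ is exactly the standing hypothesis~\eqref{eq:Mneq1}.)

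Since $0<s<b$, the quantity $b-s$ is positive and the lower-endpoint inequality may be squared safely: $b-\sqrt{b^2-c}<s$ is equivalent to $\sqrt{b^2-c}>b-s$, hence to $b^2-c>(b-s)^2$, hence to $c<s(2b-s)$. Rearranging $c<2bs-s^2$ as $b>\dfrac{c+s^2}{2s}$ and squaring once more (both sides positive) converts the requirement, via $b^2=\dfrac{|F'(i\omega_k)|}{4M}$, into an explicit upper bound on the remainder bound, namely $M<\dfrac{|F'(i\omega_k)|\,s^2}{(c+s^2)^2}$. I emphasise that the prior knowledge $s<b$ furnished by Proposition~\ref{prp:spr} is precisely what legitimises this chain of squarings.

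It then remains to make this threshold explicit and compare it with~\eqref{eq:Mneq2}. Reading $F(i\omega_k)=\dfrac{c_k^2}{\omega_k}$ off the expression for $\lambda_k^*$, we get $c=\left|\dfrac{F(i\omega_k)}{F'(i\omega_k)}\right|=\dfrac{c_k^2}{\omega_k|F'(i\omega_k)|}$, while $s^2=\dfrac{c_k^2}{\gamma\omega_k^2|F'(i\omega_k)|^2}$ by the formula recorded before Proposition~\ref{prp:spr}. Substituting these yields $c+s^2=\dfrac{c_k^2\,(\gamma\omega_k|F'(i\omega_k)|+1)}{\gamma\omega_k^2|F'(i\omega_k)|^2}$, and a short simplification reduces $M<\dfrac{|F'(i\omega_k)|\,s^2}{(c+s^2)^2}$ to the exact threshold $M<\dfrac{\gamma\omega_k^2|F'(i\omega_k)|^3}{c_k^2(\gamma\omega_k|F'(i\omega_k)|+1)^2}$.

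Finally I would close the argument by the comparison $\omega_k\le\omega_k^2$, valid for $\omega_k>1$: the right-hand side of the hypothesis~\eqref{eq:Mneq2}, which carries $\omega_k$, is no larger than the exact threshold, which carries $\omega_k^2$. Hence~\eqref{eq:Mneq2} implies $M<\dfrac{|F'(i\omega_k)|\,s^2}{(c+s^2)^2}$, and retracing the equivalences gives $s\in\cal I$, so $R_k=\tfrac12|\mathrm{Re}\,\lambda_k^*|$ is admissible and $R_k\le\tfrac12|\mathrm{Re}\,\lambda_k^*|$ can be achieved. I expect the only real difficulty to be algebraic bookkeeping: carrying the two substitutions of $c$ and $s^2$ without error and keeping the inequality directions straight through the successive squarings, since the sought threshold and the stated hypothesis differ only by the benign factor $\omega_k$ versus $\omega_k^2$ that the assumption $\omega_k>1$ absorbs.
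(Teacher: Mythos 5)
Your proposal follows essentially the same route as the paper: both arguments reduce the claim to showing that $\tfrac12|\mathrm{Re}\,\lambda_k^*|$ exceeds the square of the left endpoint of $\mathcal I$ (the right endpoint being controlled by Proposition~\ref{prp:spr}), both reduce that lower-endpoint inequality by the same squarings and substitutions to the exact threshold $M<\dfrac{\gamma\omega_k^2|F'(i\omega_k)|^3}{c_k^2\,(\gamma\omega_k|F'(i\omega_k)|+1)^2}$, and both absorb the discrepancy between $\omega_k$ and $\omega_k^2$ via $\omega_k>1$. Your algebra is correct.

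There is, however, one missing step in the logical bootstrapping. You call \eqref{eq:Mneq1} a ``standing hypothesis,'' but Lemma~\ref{le:R} assumes only \eqref{eq:Mneq2}; accordingly, the paper's proof devotes its first step to showing that \eqref{eq:Mneq2} together with $\omega_k>1$ implies \eqref{eq:Mneq1}. That implication is what licenses everything you use: the nonemptiness of $\mathcal I$ (equivalently $b^2>c$), the applicability of Lemma~\ref{le:Rouche}, and in particular Proposition~\ref{prp:spr}, whose conclusion $s<b$ is what justifies your chain of squarings. As written, your appeal to Proposition~\ref{prp:spr} is therefore unsupported. The gap is easily closed with material already in your computation: since $(\gamma\omega_k|F'(i\omega_k)|+1)^2\ge 4\gamma\omega_k|F'(i\omega_k)|$, the right-hand side of \eqref{eq:Mneq2} is at most $\dfrac{|F'(i\omega_k)|^2}{4c_k^2}\le\dfrac{\omega_k|F'(i\omega_k)|^2}{4c_k^2}=\dfrac{|F'(i\omega_k)|^2}{4|F(i\omega_k)|}$ for $\omega_k\ge 1$, so \eqref{eq:Mneq2} indeed implies \eqref{eq:Mneq1}; inserting this observation at the start makes your proof complete.
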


From the considerations given above, all eigenvalues of the operator $A$ are distinct complex numbers provided by assumption {\it (A2)}.
Since each eigenvalue $\lambda_n$ is simple root of~\eqref{eq:chareq}, examination of the spectral projection of the operator $A$, corresponding to $\lambda_n$, guarantees that $\lambda_n$ is algebraically simple.

\begin{proposition}\label{prp:O}
    Let assumptions of Lemma~\ref{le:R} be satisfied. Then $\lambda_k \underset{k\to\infty}= i\omega_k + {\cal O} \left(c_k^2 \right)$, $\bar\lambda_k \underset{k\to\infty}= -i\omega_k + {\cal O} \left(c_k^2 \right)$ for all eigenvalues $\lambda_k$ and $\bar\lambda_k$ of the operator $A$.
\end{proposition}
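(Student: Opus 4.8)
The goal is to establish the asymptotic expansion $\lambda_k = i\omega_k + \mathcal{O}(c_k^2)$ as $k\to\infty$, and the plan is to extract this directly from the localization already obtained in Lemma~\ref{le:Rouche} and Lemma~\ref{le:R}. Recall that Lemma~\ref{le:Rouche} places the $k$-th eigenvalue $\lambda_k$ inside the disk $\Gamma$ centered at the linearized root $\lambda_k^*$ with radius $R_k$, and Lemma~\ref{le:R} bounds $R_k \leq \tfrac12|{\rm Re}\,\lambda_k^*|$. Hence it suffices to control two distances: first, $|\lambda_k^* - i\omega_k|$, which by the explicit formula for $\lambda_k^*$ equals $c = |F(i\omega_k)/F'(i\omega_k)|$; and second, $|\lambda_k - \lambda_k^*| \leq R_k$, which by Lemma~\ref{le:R} is itself controlled by $|{\rm Re}\,\lambda_k^*| \leq |\lambda_k^* - i\omega_k| = c$. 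By the triangle inequality, $|\lambda_k - i\omega_k| \leq R_k + c \leq \tfrac32 c$, so the whole estimate reduces to showing $c = |F(i\omega_k)/F'(i\omega_k)| = \mathcal{O}(c_k^2)$.

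First I would compute $F(i\omega_k)$ and $F'(i\omega_k)$ asymptotically. From the definition of $F$, the value $F(i\omega_k)$ equals $\dfrac{2ic_k^2}{2i\omega_k} = \dfrac{c_k^2}{\omega_k}$ (the regular-part sum and the $\tfrac{2i}{\gamma\lambda}(\lambda - i\omega_k)$ term both vanish at $\lambda = i\omega_k$), so $F(i\omega_k)$ is exactly of order $c_k^2/\omega_k$. For the denominator $F'(i\omega_k)$, differentiating and evaluating at $i\omega_k$ gives, up to the factor $i$ exhibited in the formula for $\lambda_k^*$, the quantity
\begin{equation*}
    F'(i\omega_k) = i\left[2\sumk \frac{c_j^2}{\omega_j^2-\omega_k^2} + \frac{c_k^2}{2\omega_k^2} + \frac{2}{i\gamma\omega_k}\cdot i\right],
\end{equation*}
whose dominant term as $k\to\infty$ is the $\tfrac{2}{\gamma\omega_k}$ contribution already displayed in $\lambda_k^*$; the key point is that $F'(i\omega_k)$ stays bounded away from zero (indeed $|F'(i\omega_k)| \gtrsim 1/\omega_k$ or better, so that $1/|F'(i\omega_k)|$ grows at most polynomially). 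Combining these, $c = |F(i\omega_k)|/|F'(i\omega_k)| \sim (c_k^2/\omega_k)\cdot \omega_k = \mathcal{O}(c_k^2)$, which is precisely the required rate.

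The main obstacle will be making the lower bound on $|F'(i\omega_k)|$ rigorous and uniform in $k$, because $F'(i\omega_k)$ contains the infinite series $\sum_{j\neq k} c_j^2/(\omega_j^2 - \omega_k^2)$ whose sign and magnitude are not immediately transparent: nearby frequencies $\omega_{k\pm1}$ produce terms with small denominators $\omega_j^2 - \omega_k^2$ that could in principle cause cancellation against the explicit $\tfrac{2}{\gamma\omega_k}$ term. I would handle this by using the spectral-gap hypothesis (A2), $\omega_{j+1}-\omega_j \geq \kappa$, to bound the series: splitting off the two neighbouring indices and estimating the tail via $|\omega_j^2 - \omega_k^2| = |\omega_j-\omega_k||\omega_j+\omega_k| \geq \kappa|j-k|(\omega_j+\omega_k)$, together with the decay of $c_j^2$ furnished by (A3). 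One must then verify that the combination of these terms does not vanish, which is exactly the content already guaranteed by ${\rm Re}\,\lambda_k^* < 0$ and the hypotheses of Lemma~\ref{le:R}; invoking those hypotheses lets me treat $1/|F'(i\omega_k)|$ as being of at most polynomial growth in $\omega_k$, controlled so that the product with $|F(i\omega_k)| = c_k^2/\omega_k$ retains the clean order $\mathcal{O}(c_k^2)$. Finally, the estimate for $\bar\lambda_k$ near $-i\omega_k$ follows immediately from property~2 of Proposition~\ref{prp:nlfl} (symmetry of the spectrum about the real axis), so no separate computation is needed there.
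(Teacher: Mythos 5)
Your reduction has the same skeleton as the paper's proof: both rest on the triangle inequality $|\lambda_k - i\omega_k| \le R_k + |\lambda_k^* - i\omega_k|$, on Lemma~\ref{le:R} to absorb $R_k$ into $\tfrac12|{\rm Re}\,\lambda_k^*|$, and on the evaluation $F(i\omega_k) = c_k^2/\omega_k$, so everything hinges on showing $|F(i\omega_k)/F'(i\omega_k)| = {\cal O}(c_k^2)$; handling $\bar\lambda_k$ by spectral symmetry is also fine. The genuine gap is precisely the step you flag as ``the main obstacle'': the lower bound on $|F'(i\omega_k)|$. Your displayed formula for $F'(i\omega_k)$ misplaces a factor of $i$. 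Written correctly,
\begin{equation*}
    F'(i\omega_k) = i\left(2\sum_{j\neq k}\frac{c_j^2}{\omega_j^2-\omega_k^2} + \frac{c_k^2}{2\omega_k^2}\right) + \frac{2}{\gamma\omega_k},
\end{equation*}
so the series part is \emph{purely imaginary} while $2/(\gamma\omega_k)$ is \emph{real}. Hence ${\rm Re}\,F'(i\omega_k) = 2/(\gamma\omega_k)$ exactly, $|F'(i\omega_k)| \ge 2/(\gamma\omega_k)$ with no possibility of cancellation, and immediately $|F(i\omega_k)/F'(i\omega_k)| \le \gamma c_k^2/2$. In your version the $2/(\gamma\omega_k)$ term sits inside the same $i$-bracket as the real series, which makes the two terms collinear in ${\mathbb C}$; if that were the true structure, the sum could actually vanish for suitable $\gamma$ and $k$ (the series can be negative), and no splitting of the $j = k\pm1$ terms via (A2)--(A3) could restore a uniform lower bound. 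So the obstacle you set out to overcome is an artifact of the miscomputed formula, and the workaround you sketch cannot close it.

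Moreover, the fallback you invoke is insufficient even in principle: knowing only that $1/|F'(i\omega_k)|$ has ``at most polynomial growth'' gives $|F(i\omega_k)/F'(i\omega_k)| = {\cal O}(c_k^2\,\omega_k^{m-1})$, which is the claimed rate only if $m \le 1$; and neither ${\rm Re}\,\lambda_k^* < 0$ nor the hypotheses of Lemma~\ref{le:R} (inequality~\eqref{eq:Mneq2} constrains $M$, not $|F'(i\omega_k)|$ from below) supply the required bound $1/|F'(i\omega_k)| = {\cal O}(\omega_k)$. The paper sidesteps all of this by rationalizing the fraction in the formula for $\lambda_k^*$: with $S$ denoting the real series above, it reads off $|{\rm Re}\,\lambda_k^*| = 2\gamma c_k^2/\bigl(4+\gamma^2\omega_k^2 S^2\bigr) \le \gamma c_k^2/2$ and $|{\rm Im}\,\lambda_k^* - \omega_k| = (c_k^2/\omega_k)\,|S|/\bigl(S^2 + 4/(\gamma^2\omega_k^2)\bigr) \le \gamma c_k^2/4$, the last step by the elementary inequality $|S|/(S^2+a^2)\le 1/(2a)$. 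That computation is exactly the ``no cancellation'' fact in disguise. Once you correct the formula for $F'(i\omega_k)$, your argument closes and coincides with the paper's.
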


The proofs of Propositions~\ref{prp:spr} and~\ref{prp:O} and Lemma~\ref{le:R} are given in the Appendix.

\subsection{Riesz basis property}\label{sec:Riesz}
For each $\lambda$ satisfying characteristic equation~\eqref{eq:chareq} find nonzero vector
$\varepsilon = \begin{pmatrix} q \\ p \end{pmatrix}$, such that
$(A - \lambda I) \varepsilon = 0$, which is written as
\begin{equation*}
    \begin{aligned}
        -(i\,\omega_j + \lambda)\,q_j & = \frac\gamma2\,c_j\,\phi(q,p), \\
         (i\,\omega_j - \lambda)\,p_j & = \frac\gamma2\,c_j\,\phi(q,p),
    \end{aligned} \qquad j\in\mathbb N,
\end{equation*}
where $\phi(q,p) = \sum\limits_{\iota=1}^\infty c_\iota (q_\iota + p_\iota)$.
The solution (recall that $\lambda \neq \pm i\omega_j$)
\begin{equation*}
    q_j = -\frac1{i\omega_j + \lambda}\, \frac\gamma2\, c_j\, \phi, \quad
    p_j = \frac1{i\omega_j - \lambda}\, \frac\gamma2\, c_j\, \phi
\end{equation*}
leads to the coupling equation for the parameter $\phi$:
$$\left(1 - \frac\gamma2 \sumf{j} c_j \left( \frac1{i\omega_j - \lambda} - \frac1{i\omega_j + \lambda} \right) \right) \phi = 0.$$
For all eigenvalues $\lambda$ expression in brackets is equal to zero, and the last equation is fulfilled for any $\phi$.
We choose $\phi = \frac2\gamma$ and obtain for every eigenvalue $\lambda$ the corresponding eigenvector $\varepsilon$ with components
$$q_j = -\frac{c_j}{i\omega_j + \lambda}, \quad
    p_j = \frac{c_j}{i\omega_j - \lambda}.$$

For clarity we agree to denote the eigenvalue located in the neighborhood of $i\omega_n$ by $\lambda_n$, then $\lambda_{-n} = \bar\lambda_n$ would be the one in the neighborhood of $-i\omega_n$.
Let us multiply each eigenvector $\varepsilon_n$ associated with $\lambda_n$ by $\dfrac{i\omega_n-\lambda_n}{c_n}$,
and multiply each eigenvector $\varepsilon_{-n}$ associated with $\bar\lambda_n$ by\; $-\dfrac{i\omega_n+\bar\lambda_n}{c_n}$.

\begin{lemma}\label{le:Riesz}
    Let assumption~{\it (A2)} be satisfied.
    Then the eigenvectors $\{\varepsilon_n\}_{n=1}^\infty$ and $\{\varepsilon_{-n}\}_{n=1}^\infty$ of $A$ form a Riesz basis in the space~$H$.
\end{lemma}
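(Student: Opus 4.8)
The plan is to show that the normalized eigenvectors are \emph{quadratically close} to the canonical orthonormal basis of $H=\ell^2\times\ell^2$ and then to invoke the classical Bari stability theorem. Denote the canonical orthonormal basis by $\{h_m\}_{m\in\mathbb Z\setminus\{0\}}$, where $h_n=(0,\mathbf e_n)^\top$ and $h_{-n}=(\mathbf e_n,0)^\top$, with $\mathbf e_n$ the standard basis of $\ell^2$. After the prescribed normalization, the eigenvector $\varepsilon_n$ associated with $\lambda_n$ has $p$-components $p_n=1$ and $p_j=\frac{i\omega_n-\lambda_n}{c_n}\,\frac{c_j}{i\omega_j-\lambda_n}$ for $j\neq n$, and $q$-components $q_j=-\frac{i\omega_n-\lambda_n}{c_n}\,\frac{c_j}{i\omega_j+\lambda_n}$; analogously $\varepsilon_{-n}$ has dominant component $q_n=1$. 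Thus $h_n$ and $h_{-n}$ are precisely the dominant coordinate directions of $\varepsilon_n$ and $\varepsilon_{-n}$, and the natural pairing to test is $\varepsilon_m\leftrightarrow h_m$.

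First I would estimate the defect $\|\varepsilon_n-h_n\|^2=\sum_{j\neq n}|p_j|^2+\sum_j|q_j|^2$. Factoring out $\bigl|\tfrac{i\omega_n-\lambda_n}{c_n}\bigr|^2$, the remaining sum $\sum_{j\neq n}\frac{c_j^2}{|i\omega_j-\lambda_n|^2}+\sum_j\frac{c_j^2}{|i\omega_j+\lambda_n|^2}$ must be bounded uniformly in $n$: by assumption {\it (A2)} the spacing satisfies $|\omega_j-\omega_n|\geq\kappa|j-n|$, and the localization $\lambda_n\to i\omega_n$ then bounds each denominator from below (for $j\neq n$ by a multiple of $\kappa|j-n|$, and in the $q$-sum by a multiple of $\omega_j+\omega_n\ge 2\omega_1$), while $(c_j)\in\ell^2$ (needed for $C_1$ to be well defined) ensures convergence. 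By Proposition~\ref{prp:O}, $i\omega_n-\lambda_n={\cal O}(c_n^2)$, whence $\bigl|\tfrac{i\omega_n-\lambda_n}{c_n}\bigr|^2={\cal O}(c_n^2)$ and therefore $\|\varepsilon_n-h_n\|^2={\cal O}(c_n^2)$; the same bound holds for $\|\varepsilon_{-n}-h_{-n}\|^2$. Summing and using $(c_j)\in\ell^2$ once more yields $\sum_{m}\|\varepsilon_m-h_m\|^2<\infty$, i.e. the family $\{\varepsilon_m\}$ is quadratically close to the orthonormal basis $\{h_m\}$.

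Quadratic closeness means the operator $T$ defined by $Th_m=\varepsilon_m$ equals $I+K$ with $K$ Hilbert--Schmidt, hence compact; consequently $T$ is Fredholm of index zero, and $\{\varepsilon_m\}$ is a Riesz basis if and only if $T$ is boundedly invertible, i.e. if and only if $T$ is injective, which is exactly the $\omega$-independence of $\{\varepsilon_m\}$ (no nontrivial $\ell^2$-relation $\sum_m a_m\varepsilon_m=0$). To obtain this I would establish minimality of $\{\varepsilon_m\}$ through a biorthogonal system built from the eigenvectors $\{\psi_m\}$ of the adjoint $A^*$. Since $A^*$ has the same block structure, its eigenvalues are $\{\bar\lambda_m\}$ with eigenvectors computable explicitly; because all eigenvalues are simple and distinct (Section~\ref{sec:chareq}), the relation $\lambda_n\langle\varepsilon_n,\psi_m\rangle=\langle A\varepsilon_n,\psi_m\rangle=\langle\varepsilon_n,A^*\psi_m\rangle=\lambda_m\langle\varepsilon_n,\psi_m\rangle$ forces $\langle\varepsilon_n,\psi_m\rangle=0$ for $n\neq m$ and $\langle\varepsilon_n,\psi_n\rangle\neq0$. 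Pairing any convergent relation $\sum_m a_m\varepsilon_m=0$ with $\psi_n$ then gives $a_n=0$, so $\{\varepsilon_m\}$ is $\omega$-independent, $T$ is invertible, and the Riesz basis property follows.

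I expect two delicate points. The first is the uniform-in-$n$ boundedness of the coefficient sums $\sum_{j\neq n}c_j^2/|i\omega_j-\lambda_n|^2$, which hinges on combining the uniform gap {\it (A2)} with the Proposition~\ref{prp:O} localization $\lambda_n=i\omega_n+{\cal O}(c_n^2)$ to keep the nearest denominators bounded away from zero simultaneously for all $n$ (the finitely many small indices being handled separately via Proposition~\ref{prp:nlfl}). The second, and the main obstacle, is the injectivity/completeness step: Hilbert--Schmidt closeness by itself does not yield a Riesz basis, so the crux is to exclude a nontrivial $\ell^2$-null relation, and for this the explicit adjoint eigenvectors together with the simplicity of the spectrum are essential.
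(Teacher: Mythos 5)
Your proposal is correct, and its core coincides with the paper's proof: you normalize the eigenvectors so that the dominant coordinate equals one, pair them with the canonical orthonormal basis of $\ell^2\times\ell^2$ (which is exactly the eigenbasis of the unperturbed operator $A_0$ with $\gamma=0$ used in the paper), and establish quadratic closeness by combining the gap condition \emph{(A2)} with the localization $\lambda_n=i\omega_n+{\cal O}(c_n^2)$ of Proposition~\ref{prp:O} and the summability $(c_j)\in\ell^2$. Where you genuinely diverge is in the final step. The paper stops at quadratic closeness and delegates the passage to the Riesz basis property to the cited comparison-type result \cite[Theorem 2.38]{GW2019}, which is tailored to eigenvector families of operators with discrete spectrum and is designed precisely so that no separate independence verification is needed (finitely many members of the family may be exchanged, and simplicity of the spectrum then identifies the basis with the original family). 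You instead run the classical Bari argument: $T=I+K$ with $K$ Hilbert--Schmidt, hence Fredholm of index zero, so bounded invertibility reduces to $\omega$-independence, which you prove through the biorthogonal family of adjoint eigenvectors. This buys a self-contained proof that makes explicit what the paper's citation conceals; the price is that your argument leans on one additional fact: the orthogonality $\langle\varepsilon_n,\psi_m\rangle=0$ for $n\neq m$ needs only distinctness of eigenvalues, but the non-degeneracy $\langle\varepsilon_n,\psi_n\rangle\neq0$ requires \emph{algebraic} (not merely geometric) simplicity, which the paper asserts at the end of Section~\ref{sec:chareq}. In the present setting you can close this point explicitly: the block structure gives $A^*=\overline{A}$ (entrywise conjugation), so one may take $\psi_n=\overline{\varepsilon_n}$, and a direct computation with $q_j=-c_j/(i\omega_j+\lambda_n)$, $p_j=c_j/(i\omega_j-\lambda_n)$ yields
\begin{equation*}
\langle\varepsilon_n,\psi_n\rangle=\sum\limits_{j=1}^\infty\bigl(q_j^2+p_j^2\bigr)
=-\frac{2}{\gamma}\,h'(\lambda_n),\qquad
h(\lambda)=1+\gamma\lambda\sum\limits_{j=1}^\infty\frac{c_j^2}{\omega_j^2+\lambda^2},
\end{equation*}
so nonvanishing of the pairing is exactly simplicity of the root $\lambda_n$ of the characteristic function, which Lemma~\ref{le:Rouche} provides. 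With that observation inserted, your proof is complete and fully rigorous.
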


\begin{proof}
    Unperturbed operator ($\gamma=0$) $A_0 = \begin{pmatrix} - i\,\Omega & 0 \\ 0 & i\,\Omega \end{pmatrix}$ has eigenvalues $\nu = \pm i\omega_j$.
    For $\nu_{-n} = \bar\nu_n = -i\omega_n$ the components of the corresponding eigenvector
    $\tilde\varepsilon_{-n} = \begin{pmatrix} \tilde q \\ \tilde p \end{pmatrix}_{\!\!-n}$ are
    $\tilde q_{-n} = 1$, $\tilde q_\iota = 0$, $\forall \iota \neq -n$, $\tilde p_j = 0$, $\forall j\in{\mathbb N}$;
    components of the eigenvector corresponding to eigenvalue $\nu_n = i\omega_n$ are
    $\hat q_j = 0$, $\forall j\in{\mathbb N}$, $\hat p_n = 1$, $\hat p_\iota = 0$, $\forall \iota \neq n$.

    Further we prove that the system of eigenvectors $\{\varepsilon_n\}_{n=1}^\infty \cup \{\varepsilon_{-n}\}_{n=1}^\infty$ is quadratically close
    to the orthonormal basis $\{\tilde\varepsilon_n\}_{n=1}^\infty \cup \{\tilde\varepsilon_{-n}\}_{n=1}^\infty$.
    Consider
    \begin{equation*}
        \begin{aligned}
            \| \varepsilon_n - \tilde\varepsilon_n \|_H^2 =
                & \sum\limits_{j\neq n} \frac{c_j^2}{c_n^2}\, |i\omega_n-\lambda_n|^2 \left( \frac1{|i\omega_j+\lambda_n|^2} + \frac1{|i\omega_j-\lambda_n|^2} \right)
                    + \left|\frac{i\omega_n-\lambda_n}{i\omega_n+\lambda_n}\right|^2 \\
            & + \sum\limits_{j\neq n} \frac{c_j^2}{c_n^2}\, |i\omega_n+\bar\lambda_n|^2 \left( \frac1{|i\omega_j+\bar\lambda_n|^2} + \frac1{|i\omega_j-\bar\lambda_n|^2} \right)
                + \left|\frac{i\omega_n+\bar\lambda_n}{i\omega_n-\bar\lambda_n}\right|^2 \\
            \leq & \sum\limits_{j\neq n} \frac{c_j^2}{c_n^2}\, |i\omega_n-\lambda_n|^2 \left( \frac1{|i\omega_1+\lambda_n|^2} + \frac1{|i\omega_{n-1}-\lambda_n|^2} + \frac1{|i\omega_{n+1}-\lambda_n|^2} \right) \\
            & + \sum\limits_{j\neq n} \frac{c_j^2}{c_n^2}\, |i\omega_n+\bar\lambda_n|^2 \left( \frac1{|i\omega_{n-1}+\bar\lambda_n|^2} + \frac1{|i\omega_{n+1}+\bar\lambda_n|^2} + \frac1{|i\omega_1-\bar\lambda_n|^2} \right) \\
            & + \left|\frac{i\omega_n-\lambda_n}{i\omega_n+\lambda_n}\right|^2 + \left|\frac{i\omega_n+\bar\lambda_n}{i\omega_n-\bar\lambda_n}\right|^2.
        \end{aligned}
    \end{equation*}
    According to Proposition~\ref{prp:O}, $\dfrac{|i\omega_n-\lambda_n|^2}{c_n^2} \underset{n\to\infty}= {\cal O} \left(c_n^2 \right)$ and $\dfrac{|i\omega_n+\bar\lambda_n|^2}{c_n^2} \underset{n\to\infty}= {\cal O} \left(c_n^2 \right)$;
    sequences $\left\{ \dfrac1{|i\omega_{n-1}-\lambda_n|^2} \right\}_{n=1}^\infty$ and $\left\{ \dfrac1{|i\omega_{n+1}-\lambda_n|^2} \right\}_{n=1}^\infty$ are bounded due to the assumption of the lemma, as are
    sequences $\left\{ \dfrac1{|i\omega_{n-1}+\bar\lambda_n|^2} \right\}_{n=1}^\infty$ and $\left\{ \dfrac1{|i\omega_{n+1}+\bar\lambda_n|^2} \right\}_{n=1}^\infty$.

    Thus,
    $\sumf{n}\|\varepsilon_n - \tilde\varepsilon_n\|^2 < \infty$
    and $\sumf{n}\|\varepsilon_{-n} - \tilde\varepsilon_{-n}\|^2 < \infty$.
    The statement of the lemma then follows directly from~\cite[Theorem 2.38]{GW2019}.
\end{proof}

\begin{corollary}\label{crl:D}
    The operator $A$ is diagonalizable with spectral operator of the form 
    $G: \varepsilon = \begin{pmatrix} q \\ p \end{pmatrix} \mapsto  G\varepsilon = \begin{pmatrix} G_1\,q \\ G_2\,p \end{pmatrix}$,
    where
    $G_1 = {\rm diag}(\bar\lambda_1, \bar\lambda_2, \ldots)$, $G_2 = {\rm diag}(\lambda_1, \lambda_2, \ldots)$ with $\lambda_j$ and $\bar\lambda_j$~--- eigenvalues of the operator~$A$, $j\in{\mathbb N}$.
\end{corollary}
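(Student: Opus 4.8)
The plan is to realize the diagonalization as an explicit similarity transformation built from the Riesz basis furnished by Lemma~\ref{le:Riesz}. First I would invoke the defining property of a Riesz basis: since $\{\varepsilon_n\}_{n=1}^\infty \cup \{\varepsilon_{-n}\}_{n=1}^\infty$ is a Riesz basis of $H$, there is a bounded and boundedly invertible operator $\Phi$ on $H$ sending the orthonormal basis $\{\tilde\varepsilon_n\}_{n=1}^\infty \cup \{\tilde\varepsilon_{-n}\}_{n=1}^\infty$ (introduced in the proof of Lemma~\ref{le:Riesz}) to the eigenvectors, i.e.\ $\Phi\,\tilde\varepsilon_n = \varepsilon_n$ and $\Phi\,\tilde\varepsilon_{-n} = \varepsilon_{-n}$ for every $n\in{\mathbb N}$. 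This $\Phi$ is the transformation that will carry $A$ into its diagonal form.

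Next I would compute the action of $\Phi^{-1} A \Phi$ on each orthonormal basis vector. Using the eigenrelations $A\varepsilon_n = \lambda_n\varepsilon_n$ and $A\varepsilon_{-n} = \bar\lambda_n\varepsilon_{-n}$ (recall the convention $\lambda_{-n} = \bar\lambda_n$), one obtains $\Phi^{-1}A\Phi\,\tilde\varepsilon_n = \lambda_n\tilde\varepsilon_n$ and $\Phi^{-1}A\Phi\,\tilde\varepsilon_{-n} = \bar\lambda_n\tilde\varepsilon_{-n}$. Now I would recall the explicit coordinate form of the orthonormal basis from Lemma~\ref{le:Riesz}: $\tilde\varepsilon_{-n}$ is the vector whose $q$-part is the $n$-th coordinate unit vector and whose $p$-part vanishes, while $\tilde\varepsilon_n$ is the vector whose $p$-part is the $n$-th coordinate unit vector and whose $q$-part vanishes. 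Reading the two eigenrelations against this coordinate description shows that $\Phi^{-1}A\Phi$ multiplies the $n$-th $q$-coordinate by $\bar\lambda_n$ and the $n$-th $p$-coordinate by $\lambda_n$; this is exactly the block-diagonal operator $G$ with $G_1 = {\rm diag}(\bar\lambda_1,\bar\lambda_2,\dots)$ acting on $q$ and $G_2 = {\rm diag}(\lambda_1,\lambda_2,\dots)$ acting on $p$. Hence $A = \Phi G \Phi^{-1}$, which is the asserted diagonalization.

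The main obstacle is bookkeeping with the unbounded domains rather than any deep difficulty. Since $\lambda_n \sim i\omega_n \to \infty$ by Proposition~\ref{prp:O}, both $A$ and $G$ are unbounded, so the identity $A = \Phi G \Phi^{-1}$ must be read as an equality of closed operators together with the domain statement $D(A) = \Phi\, D(G)$, where $D(G) = \bigl\{(q,p)\in H : \sumf{n} |\lambda_n|^2\bigl(|q_n|^2 + |p_n|^2\bigr) < \infty\bigr\}$. I would verify this by checking the identity on finite linear combinations of basis vectors, which form a common core for both operators, and then passing to the closure, using that $\Phi$ and $\Phi^{-1}$ are bounded and therefore preserve the relevant convergence. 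The unconditional convergence of the Riesz-basis expansion of any $\varepsilon\in H$ legitimizes the term-by-term application of $A$. With the domains matched, $G$ is the spectral operator similar to $A$, which completes the proof.
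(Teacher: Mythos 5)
Your proposal is correct and follows essentially the same route as the paper, which states Corollary~\ref{crl:D} as a direct consequence of the Riesz basis property of Lemma~\ref{le:Riesz} and, in the proof of Theorem~\ref{th:main}, uses precisely the similarity $A = Q G Q^{-1}$ with $Q$ mapping the coordinate basis to the eigenvectors. Your version is in fact more careful than the paper's implicit argument, since you also address the domain identification $D(A) = \Phi\,D(G)$ for the unbounded operators (where the claim that the span of eigenvectors is a core for $A$ can be justified by its density and invariance under the semigroup $e^{tA}$).
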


\section{Proof of the main result}\label{sec:normest}

In this section, we investigate the asymptotics of the resolvent $R(\lambda;G)$ for the diagonal operator $G$, obtained in Corollary~\ref{crl:D}, and apply these results to the proof of Theorem~\ref{th:main}.
As in the previous section, we denote by $\lambda_j$ the eigenvalues of $G$ located in the upper half of the complex plane (in a neighborhood of $i\omega_j$) and by $\bar\lambda_j$ the  eigenvalues located in the lower half of the complex plane  (in a neighborhood of $-i\omega_j$).

\begin{lemma}\label{le:ResNmBds}
    Let conditions of Lemma~\ref{le:R} be satisfied. If assumptions~{\it (A2)} and~{\it (A3)} are fulfilled, then
    $\|R(is;G)\| \underset{s\to\infty}= {\cal O}(|s|^\alpha)$ .
\end{lemma}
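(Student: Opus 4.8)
The plan is to exploit that, after the Riesz-basis transformation of Corollary~\ref{crl:D}, the operator $G$ is diagonal, so its resolvent is diagonal as well: $R(is;G)=(G-is\,\mathrm{I})^{-1}$ acts on $\begin{pmatrix} q \\ p\end{pmatrix}$ by dividing the $q$-components by $\bar\lambda_j - is$ and the $p$-components by $\lambda_j - is$. Consequently
$$\|R(is;G)\| = \sup_{j\in\mathbb N}\max\left\{\frac1{|\lambda_j - is|},\,\frac1{|\bar\lambda_j - is|}\right\},$$
and the whole problem reduces to estimating the distance from the point $is$ to the spectrum. By the symmetry of $\sigma(A)$ with respect to the real axis (Proposition~\ref{prp:nlfl}) it suffices to treat $s\to+\infty$; for such $s$ the lower half-plane eigenvalues satisfy $|\bar\lambda_j - is|\ge s$, so they contribute at most $1/s=\mathcal O(|s|^\alpha)$ and may be discarded.

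Next I would isolate the resonant term. Writing $\lambda_j = -a_j + i b_j$ with $a_j = |\mathrm{Re}\,\lambda_j|>0$ and $b_j = \mathrm{Im}\,\lambda_j$, assumption {\it (A2)} together with the localization $\lambda_j = i\omega_j + \mathcal O(c_j^2)$ of Proposition~\ref{prp:O} shows that $\{b_j\}$ inherits a uniform gap $\kappa'>0$. Hence for every large $s$ there is at most one index $k=k(s)$ (with $\omega_{k}\to\infty$ as $s\to\infty$) for which $|b_k - s| < \kappa'/2$; for all remaining $j$ one has $|\lambda_j - is|\ge |b_j - s|\ge \kappa'/2$, giving a uniformly bounded contribution. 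Thus $\|R(is;G)\|\le \max\{2/\kappa',\,1/|\lambda_{k}-is|\}$, and since $|\lambda_k - is| = \sqrt{a_k^2 + (b_k - s)^2}\ge a_k$, everything comes down to a lower bound on $a_k = |\mathrm{Re}\,\lambda_k|$.

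I would then convert this into an estimate for $\lambda_k^*$. Because $\lambda_k$ lies in the disk $\Gamma$ of radius $R_k$ about $\lambda_k^*$ and Lemma~\ref{le:R} guarantees $R_k\le\frac12|\mathrm{Re}\,\lambda_k^*|$ once $\omega_k>1$, the real parts cannot cancel: $|\mathrm{Re}\,\lambda_k|\ge|\mathrm{Re}\,\lambda_k^*| - R_k \ge \frac12|\mathrm{Re}\,\lambda_k^*|$. Using the identity $\frac12|\mathrm{Re}\,\lambda_k^*| = \frac{c_k^2}{\gamma\omega_k^2|F'(i\omega_k)|^2}$ recorded before Proposition~\ref{prp:spr}, together with the upper bound $|F'(i\omega_k)|\le C/\omega_k$ discussed below, I obtain $a_k\ge c_k^2/(\gamma C^2)$. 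Finally assumption {\it (A3)} gives $c_k^{-2}\le\beta^{-2}\omega_k^\alpha$ for $k\ge k_0$, so $1/a_k\le (\gamma C^2/\beta^2)\,\omega_k^\alpha$; since $\omega_k = s + \mathcal O(1)$ on the resonant range, this is $\mathcal O(|s|^\alpha)$ and the lemma follows. Only finitely many small indices fall outside the range $k\ge k_0$ where {\it (A3)} applies, which is harmless since the claim is asymptotic in $s$.

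The main obstacle is the uniform bound $|F'(i\omega_k)| = \mathcal O(1/\omega_k)$. Here $F'(i\omega_k) = 2i\sum_{j\neq k}\frac{c_j^2}{\omega_j^2-\omega_k^2} + \frac{i c_k^2}{2\omega_k^2} + \frac{2}{\gamma\omega_k}$, whose real part $\frac{2}{\gamma\omega_k}$ is already $\mathcal O(1/\omega_k)$, so the work lies in the imaginary (Hilbert-type) part. I would bound $\bigl|\sum_{j\neq k}\frac{c_j^2}{\omega_j^2-\omega_k^2}\bigr|\le\frac1{\omega_k}\sum_{j\neq k}\frac{c_j^2}{|\omega_j-\omega_k|}$ using $\omega_j+\omega_k\ge\omega_k$, then invoke {\it (A2)} in the form $|\omega_j-\omega_k|\ge\kappa|j-k|$ and split the remaining sum into the near block $|j-k|\le k/2$ and the far block $|j-k|>k/2$; the far block is $\mathcal O(1/k)$ by $\sum_j c_j^2<\infty$, while the near block is controlled by the decay of $c_j$, yielding a constant independent of $k$ and hence the desired $\mathcal O(1/\omega_k)$ bound. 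This uniform control of the Cauchy-type sum is the delicate point on which the matching of the exponent $\alpha$ ultimately rests.
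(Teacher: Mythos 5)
Your proposal is correct and follows essentially the same route as the paper: bound the diagonal resolvent by the reciprocal distance from $is$ to the spectrum, localize the only dangerous eigenvalue near $is$, use Lemma~\ref{le:R} to get $|\mathrm{Re}\,\lambda_k|\ge\frac12|\mathrm{Re}\,\lambda_k^*|$, and convert $|\mathrm{Re}\,\lambda_k^*|\gtrsim c_k^2$ into ${\cal O}(|s|^\alpha)$ via assumption \textit{(A3)}. The organizational differences are minor: the paper covers the imaginary axis by segments $I_k$ centered between consecutive $\omega_k$'s and takes suprema segment by segment, while you run a gap argument isolating a single resonant index $k(s)$; these are the same idea. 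Where you genuinely add value is the last step: the paper passes from Proposition~\ref{prp:O}'s statement $|\mathrm{Re}\,\lambda_k^*|={\cal O}(c_k^2)$ directly to $1/|\mathrm{Re}\,\lambda_k^*|={\cal O}(1/c_k^2)$, which as written uses an upper bound where a \emph{lower} bound on $|\mathrm{Re}\,\lambda_k^*|$ is required; you supply exactly that lower bound through the identity $\frac12|\mathrm{Re}\,\lambda_k^*|=c_k^2/\bigl(\gamma\omega_k^2|F'(i\omega_k)|^2\bigr)$ and the uniform estimate $|F'(i\omega_k)|={\cal O}(1/\omega_k)$. One simplification: your near/far block splitting for the Cauchy-type sum is unnecessary, since \textit{(A2)} already gives $|\omega_j^2-\omega_k^2|=|\omega_j-\omega_k|(\omega_j+\omega_k)\ge\kappa\,\omega_k$ for all $j\neq k$, whence
\begin{equation*}
\Bigl|\sum_{j\neq k}\frac{c_j^2}{\omega_j^2-\omega_k^2}\Bigr|\le\frac{1}{\kappa\,\omega_k}\sum_{j}c_j^2,
\end{equation*}
and the desired bound follows immediately from $c\in\ell^2$.
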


\begin{proof}
    Applying the H{\"o}lder inequality one has
    \begin{equation*}
        \begin{aligned}
            \left\|R(\lambda;G) \begin{pmatrix} q \\ p \end{pmatrix} \right\|^2
            & = \sumf{j} \dfrac{|q_j|^2}{|\bar\lambda_j-\lambda|^2} + \sumf{j} \dfrac{|p_j|^2}{|\lambda_j-\lambda|^2} \\
            & \leq \|q\|_{\ell^2}^2\, \sup\limits_{j\in{\mathbb N}} \frac1{|\bar\lambda_j - \lambda|^2} + \|p\|_{\ell^2}^2\, \sup\limits_{j\in{\mathbb N}} \frac1{|\lambda_j - \lambda|^2},
        \end{aligned}
    \end{equation*}
    where $\lambda_j$ and $\bar\lambda_j$ denote eigenvalues of~$G$.
    Thus,
    $$\|R(\lambda;G)\|_H^2 \leq \sup\limits_{j\in{\mathbb N}} \dfrac1{|\bar\lambda_j - \lambda|^2} + \sup\limits_{j\in{\mathbb N}} \dfrac1{|\lambda_j - \lambda|^2}.$$

    Assume that $\lambda=is$, $s\in{\mathbb R}\setminus\{0\}$.
    Let us cover the imaginary axis (excluding some neighborhood of zero\footnote{Since we are interested in the resolvent's norm asymptotic behavior as $|s|\to\infty$, we can strip any neighborhood of zero out of the considerations without loss of generality.}) by the set of segments of the form $I_k=\hat I_k\cup(-\hat I_k)$, where $\hat I_k=\left[\frac i2(\omega_{k-1}+\omega_k);\: \frac i2(\omega_k+\omega_{k+1}) \right]$, $k=2,3,\ldots$ (see Fig.~\ref{fig:Eigsg}), and estimate the supremum of $\|R(\lambda;G)\|_H$ over the segment $I_k$.

    \begin{figure}[thpb]
        \centering
        \includegraphics[scale=.8]{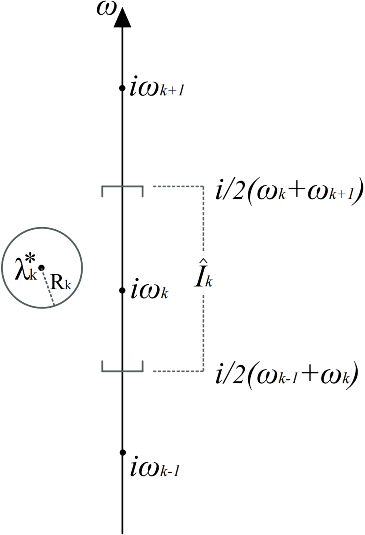}
        \caption{Schematic representation of the imaginary axis splitting}
    	\label{fig:Eigsg}
    \end{figure}

    \begin{equation*}
        \begin{aligned}
            \sup\limits_{\lambda\in I_k} \|R(\lambda;G)\|^2
            \leq\: & \sup\limits_{\substack{\lambda\in I_k \\ j<k}} \frac1{|\bar\lambda_j-\lambda|^2} + \sup\limits_{\substack{\lambda\in I_k \\ j>k}} \frac1{|\bar\lambda_j-\lambda|^2} + \sup\limits_{\lambda\in I_k} \frac1{|\bar\lambda_k-\lambda|^2} \\
                   & + \sup\limits_{\substack{\lambda\in I_k \\ j<k}} \frac1{|\lambda_j-\lambda|^2} + \sup\limits_{\substack{\lambda\in I_k \\ j>k}} \frac1{|\lambda_j-\lambda|^2} + \sup\limits_{\lambda\in I_k} \frac1{|\lambda_k-\lambda|^2} \\
               =\: & \frac1{|\bar\lambda_{k-1}+i/2(\omega_{k-1}+\omega_k)|^2} + \frac1{|\bar\lambda_{k+1}+i/2(\omega_{k+1}+\omega_k)|^2} + \frac1{|{\rm Re} \bar\lambda_k|^2} \\
                   & + \frac1{|\lambda_{k-1}-i/2(\omega_{k-1}+\omega_k)|^2} + \frac1{|\lambda_{k+1}-i/2(\omega_{k+1}+\omega_k)|^2} + \frac1{|{\rm Re} \lambda_k|^2} \\
            \leq\: & 2 \left( \frac1{|{\rm Re}\lambda_{k-1}|^2} + \frac1{|{\rm Re}\lambda_{k+1}|^2} + \frac1{|{\rm Re}\lambda_k|^2} \right).
        \end{aligned}
    \end{equation*}

    Note that $|{\rm Re}\lambda_k| \geq |{\rm Re}\lambda_k^*| - R_k$. In view of Lemma~\ref{le:R}, $|{\rm Re}\lambda_k| \geq \frac12 |{\rm Re}\lambda_k^*|$, $\forall k=1,2,\ldots$\,.
    Thus,
    $$\sup\limits_{\lambda\in I_k} \|R(\lambda;G)\| \leq \frac{2\sqrt6}{|{\rm Re}\lambda_{k+1}^*|}.$$
    Since $|{\rm Re}\lambda_k^*| \underset{k\to\infty}= {\cal O} \left(c_k^2\right)$ (see Proposition~\ref{prp:O}),
    \begin{equation}\label{eq:ORs}
        \sup\limits_{\lambda\in I_{k-1}} \|R(\lambda;G)\| \underset{\substack{k\to\infty \\ \lambda=is}}= {\cal O} \left(\dfrac1{c_k^2}\right).
    \end{equation}
    Obviously, $\dfrac1{c_k^2} \underset{k\to\infty}= {\cal O} \left(\omega_k^\alpha \right)$ provided by assumption~{\it(A3)}.

    Estimate~\eqref{eq:ORs} holds on any segment $I_{k-1}$ arbitrarily far from the origin, that implies asymptotic estimate
    $\|R(is;G)\| \underset{k\to\infty}= {\cal O}(\omega_k^\alpha) \underset{s\to\infty}= {\cal O}(|s|^\alpha)$.

\end{proof}


\begin{proof}[Proof of Theorem~\ref{th:main}.]

    Consider linear operator $Q:H\to H$, whose columns are eigenvectors $\varepsilon_n$, $\varepsilon_{-n}$ of $A$, $n=1,2,\ldots$\,. By definition of a Riesz basis, $Q$ is a bounded bijective operator, such that
    $A = Q G Q^{-1}$. Denote $\|Q\|_H=\beta_1$, $\|Q^{-1}\|_H=\beta_2$.
    In system~\eqref{eq:err_dsys} introduce the change of coordinates by $\varepsilon = Q \upsilon$, $\upsilon \in H$, thus obtaining linear system
    $\dot\upsilon(t) = G \upsilon(t)$, where $G$ is diagonal operator from Corollary~\ref{crl:D}.
    For the latter system the estimate obtained in Lemma~\ref{le:ResNmBds} implies
    $\|e^{tG} G^{-1}\| = {\cal O}(t^{-1/\alpha})$, $t\to\infty$ due to~{\cite[Theorem 2.4]{BT2010}},
    which in turn (taking into account assumption {\it (A3)}) yields
    \begin{equation*}
        \|\upsilon(t)\| \leq \beta (1+t)^{-1/\alpha} \|G \upsilon(0)\|.
    \end{equation*}
    The last inequality evidently leads to
    \begin{equation*}
        \|Q^{-1} \varepsilon(t)\| \leq \beta (1+t)^{-1/\alpha} \|G Q^{-1} \varepsilon(0)\|,
    \end{equation*}
    so
    $$\|\varepsilon(t)\| \leq \beta_1 \beta (1+t)^{-1/\alpha} \|Q^{-1} A \varepsilon(0)\| \leq \beta_1 \beta_2 \beta (1+t)^{-1/\alpha} \|A \varepsilon(0)\|.$$
    With $\tilde\beta = \beta_1 \beta_2 > 0$, estimate~\eqref{eq:SolEst} follows for all $\varepsilon(0) \in D(A)$, $t \geq 0$.

\end{proof}

\section{An illustrative example}\label{sec:ex}

As an example, we consider equation~\eqref{eq:chareq} with $\omega_j = \vartheta j^2$ and $c_j = \varsigma/j$,\; where $\vartheta$ and $\varsigma$ are positive constants.
The quadratic growth of $\omega_j$ is typical for flexible beam models, see, e.g.~{\cite[Chapter~4]{LGM1999}},~\cite{KZB2021}.
It is easy to check that, in this case, Assumption~{\it(A2)} holds with $\kappa=3\vartheta$, and Assumption~{\it(A3)} is fulfilled with
$\alpha \geq \log_k\left(\beta/\varsigma\sqrt\vartheta\right)+1$ and $k_0=2$ for any $\beta>0$.

\begin{figure}[thp]
    \centering
    \includegraphics[scale=.5]{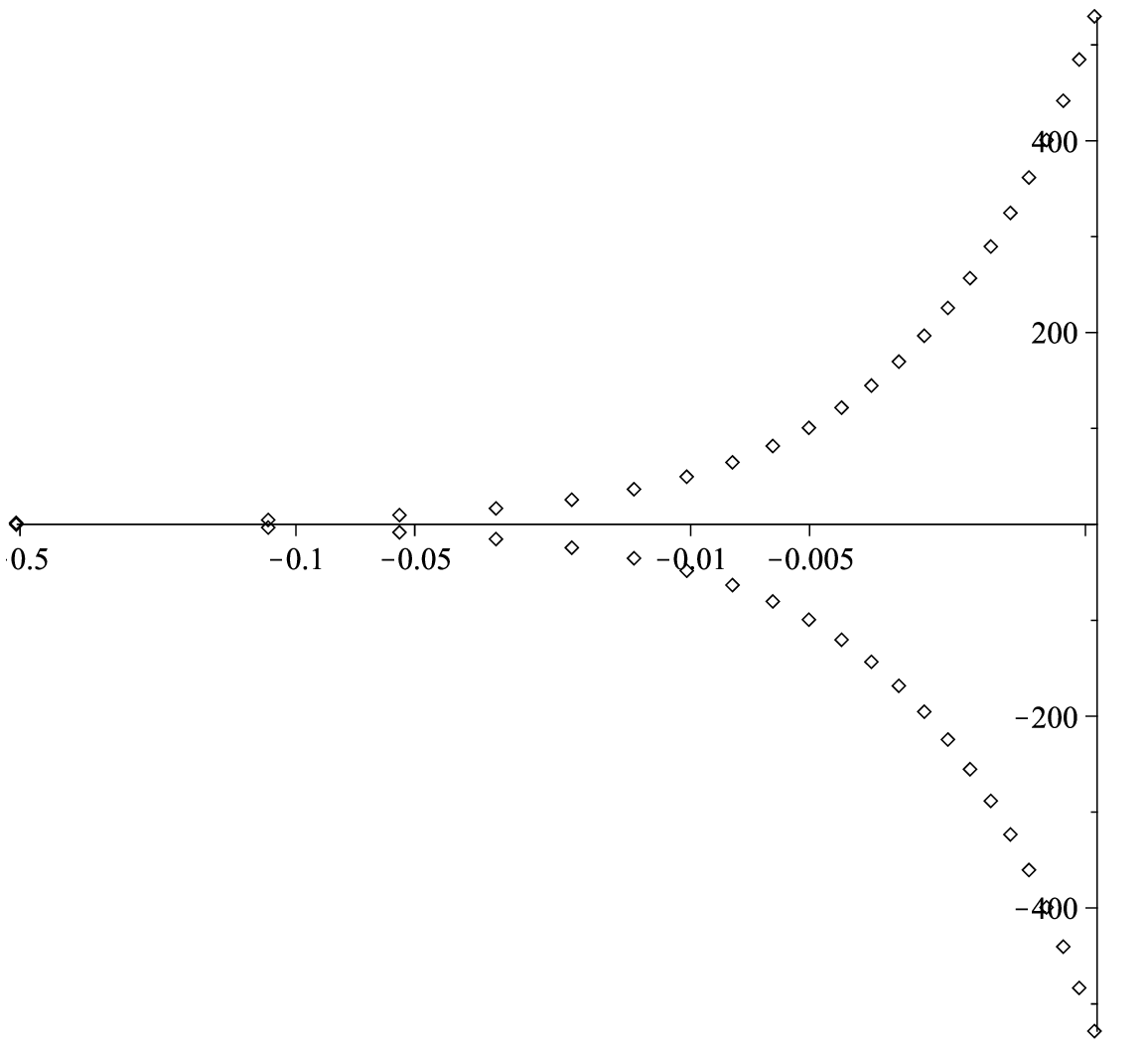}
    \caption{Schematic representation of the eigenvalues distribution}
	\label{fig:Spectrum}
\end{figure}

For visualization of the spectrum of the operator $A$ we have restricted the system to $j=\overline{1,23}$, and calculated eigenvalues taking $\omega_j=j^2$, $c_j=\frac1j$, see Fig.~\ref{fig:Spectrum}.


\section*{Appendix}\label{sec:apx}

\begin{proof}[Proof of Proposition~\ref{prp:nlfl}.]
    The resolvent is a well-defined bounded operator for all $\lambda\in{\mathbb C}$ except at the eigenvalues, so operator $A$ does not have a residual or continuous spectrum.
    It can be easily shown that the characteristic equation does not have purely imaginary roots.
    Since $f(\lambda)$ is analytic in its domain,  it is clear that the set of eigenvalues of $A$ does not have accumulation points.

    The function $f(\lambda)$ possesses singularities at points $\lambda=0$, $\lambda=\pm i\omega_j$, $j\in\mathbb N$.
    Let us investigate its local behaviour around some point $\lambda=i\omega_k$.
    From~\eqref{eq:chareq},
    $$\dfrac2{\gamma |\lambda|} = \left| \sumf{j}\dfrac{c_j^2}{\omega_j} \left(\frac1{\lambda-i\omega_j} - \frac1{\lambda+i\omega_j}\right) \right|
        \leq \sumf{j}\dfrac{c_j^2}{\omega_j}\,\cdot\, \max\left\{ \sup\limits_{j\in{\mathbb N}} \dfrac1{|\lambda-i\omega_j|},\; \sup\limits_{j\in{\mathbb N}} \dfrac1{|\lambda+i\omega_j|} \right\},$$
    so either $\inf\limits_{j\in{\mathbb N}} |\lambda-i\omega_j| \leq \breve R_\lambda$, or
    $\inf\limits_{j\in{\mathbb N}} |\lambda+i\omega_j| \leq \breve R_\lambda$, depending on where $\lambda$ is located at the upper half or bottom half complex plane.
    Thus, for every $\lambda$ satisfying~\eqref{eq:chareq} there exists $k\in{\mathbb N}$, such that $|\lambda\pm i\omega_k| \leq \breve R_\lambda$.
\end{proof}

\begin{proof}[Proof of Proposition~\ref{prp:spr}.]
    We verify that
    \begin{equation*}
        \dfrac{c_k^2}{\gamma \omega_k^2} \cdot \dfrac1{|F'(i\omega_k)|^2} < \dfrac{|F'(i\omega_k)|}{4M}
    \end{equation*}
    provided by $4M < \dfrac{\omega_k |F'(i\omega_k)|^2}{c_k^2}$.
    Indeed,
    \begin{equation*}
        4M < \dfrac{\gamma \omega_k^2 |F'(i\omega_k)|^3}{c_k^2}
    \end{equation*}
    since the upper bound of $4M$ is less than the right-hand side of the previous inequality.
    In order to check this, we calculate
    $$|F'(i\omega_k)|^2 = \left(2\sumk\dfrac{c_j^2}{\omega_j^2-\omega_k^2} + \dfrac{c_k^2}{2\omega_k^2}\right)^{\!\!2} + \dfrac4{\gamma^2\omega_k^2},$$
    that is obviously followed by $\gamma \omega_k |F'(i\omega_k)| > 1$.
\end{proof}

\begin{proof}[Proof of Lemma~\ref{le:R}.]
    Let us show that statement of Lemma~\ref{le:Rouche} holds for $M$ satisfying~\eqref{eq:Mneq2} with $\omega_k > 1$.
    (I.e. \eqref{eq:Mneq2} implies~\eqref{eq:Mneq1}, assuming $\omega_k > 1$.)
    That is,
    $$\dfrac{c_k^2}{\omega_k |F'(i\omega_k)|^2} < \dfrac{c_k^2}{4\gamma\omega_k|F'(i\omega_k)|^3}\,\bigl(\gamma\omega_k|F'(i\omega_k)|+1\bigr)^2,$$
    or
    $$\dfrac{\bigl(\gamma\omega_k|F'(i\omega_k)|+1\bigr)^2}{4\gamma|F'(i\omega_k)|} > 1.$$
    This quadratic inequality with respect to $|F'(i\omega_k)|$ is satisfied as long as $\omega_k > 1$.

    According to Lemma~\ref{le:Rouche}, $\sqrt{R_k}$ is greater than the left end of interval ${\cal I}$, so we consider the following inequality:
    \begin{equation*}
        \left(b-\sqrt{b^2-c}\right)^2 < \frac12|{\rm Re}\lambda_k^*|,
    \end{equation*}
    which is equivalent to
    \begin{equation}\label{eq:rdn2}
        2\,\sqrt{\dfrac1{4M}\left(\dfrac{|F'(i\omega_k)|^2}{4M} - \dfrac{c_k^2}{\omega_k}\right)} > \dfrac{|F'(i\omega_k)|}{2M} - \dfrac{c_k^2}{\omega_k |F'(i\omega_k)|} \left(1+\dfrac1{\gamma \omega_k\,|F'(i\omega_k)|}\right).
    \end{equation}

    Here we have to verify that
    $$\dfrac{|F'(i\omega_k)|}{2M} > \dfrac{c_k^2}{\omega_k |F'(i\omega_k)|} \left(1+\dfrac1{\gamma \omega_k\,|F'(i\omega_k)|}\right),$$
    which is equivalent to
    $$M < \dfrac{\gamma \omega_k^2 |F'(i\omega_k)|^3}{2c_k^2}\,\bigl(\gamma\omega_k|F'(i\omega_k)|+1\bigr)^2.$$
    The last inequality is fulfilled due to~\eqref{eq:Mneq1} and the estimate $\gamma \omega_k |F'(i\omega_k)| > 1$ obtained in the Proof of Proposition~\ref{prp:spr}.

    Thus, right-hand side of~\eqref{eq:rdn2} is strictly positive provided by~\eqref{eq:Mneq1}, and~\eqref{eq:rdn2} implies
    \begin{equation*}
        \dfrac1M > \dfrac{c_k^2}{\gamma \omega_k |F'(i\omega_k)|^3}\,\bigl(\gamma\omega_k|F'(i\omega_k)|+1\bigr)^2.
    \end{equation*}
    Thus, assuming~\eqref{eq:Mneq2}, we have $R_k \in \left( \left(b-\sqrt{b^2-c}\,\right)^2; \; \frac12|{\rm Re}\lambda_k^*|\right]$.

\end{proof}

\begin{remark}
    The condition $\omega_k > 1$ is not particularly restrictive. In fact, to obtain the main result, we examine the asymptotic behavior of the resolvent along the imaginary axis for large $k$. As assumed, $\{\omega_j\}_{j=1}^\infty$ form an unbounded increasing sequence, so there a number $N$ such that $\omega_n > 1$ for all $n \geq N$.
\end{remark}

\begin{proof}[Proof of Proposition~\ref{prp:O}]
    From
    $$\lambda_k^* = -\frac{c_k^2}{\omega_k} \left[2i\sumk \frac{c_j^2}{\omega_j^2-\omega_k^2} + \frac{ic_k^2}{2\omega_k^2} + \frac2{\gamma\omega_k}\right]^{-1} + i\omega_k,$$
    we see that
    $${\rm Re}\lambda_k^* = \dfrac{2\,\gamma\, c_k^2}{4+\gamma^2\,\omega_k^2\left(2\sumk\dfrac{c_j^2}{\omega_j^2-\omega_k^2} + \dfrac{c_k^2}{2\omega_k^2}\right)^{\!\!2}}
    \underset{k\to\infty}= {\cal O} \left(c_k^2 \right)$$
    and
    $${\rm Im}\lambda_k^* = \omega_k + \dfrac{c_k^2}{\omega_k} \cdot \dfrac{2\sumk\dfrac{c_j^2}{\omega_j^2-\omega_k^2} + \dfrac{c_k^2}{2\omega_k^2}}{\dfrac4{\gamma^2\omega_k^2} + \left(2\sumk\dfrac{c_j^2}{\omega_j^2-\omega_k^2} + \dfrac{c_k^2}{2\omega_k^2} \right)^{\!\!2}}
    \underset{k\to\infty}= \omega_k + {\cal O} \left(c_k^2 \right).$$
    Note that $|\lambda_k| \leq |\lambda_k^*| + R_k \leq |\lambda_k^*| + \frac12|{\rm Re} \lambda_k^*|$, which yields the statement.
\end{proof}

\section*{Acknowledgement}
This work was partially supported by a grant from the Simons Foundation (Award 1160640, Presidential Discretionary--Ukraine Support Grants, J.~Kalosha).

\bibliographystyle{aomplain}
\bibliography{Bib}
\end{document}